\newcommand{\Ell}{\mathcal{E}\hspace{-0.065cm}\ell\hspace{-0.035cm}\ell}
\newcommand{\F}{\mathbb{F}}
\newcommand{\C}{\mathbb{C}}
\newcommand{\Z}{\mathbb{Z}}
\newcommand{\Q}{\mathbb{Q}}
\DeclareMathOperator{\cl}{cl}
\DeclareMathOperator{\id}{id}
\DeclareMathOperator{\GL}{GL}
\DeclareMathOperator{\End}{End}
\DeclareMathOperator{\charac}{char}
\newcommand{\afrak}{\mathfrak{a}}
\newcommand{\mfrak}{{\mathfrak{m}}}
\newcommand{\hfrak}{{\mathfrak{h}}}
\title{Generalized class group actions on oriented elliptic curves with level structure}
\date{}
\begin{document}
\author{%
Sarah Arpin\inst{1}  \and 
Wouter Castryck\inst{2} \and 
Jonathan Komada Eriksen\inst{3} \and
Gioella Lorenzon\inst{2} \and
Frederik Vercauteren\inst{2} 
}%
\institute{
Mathematics Institute,
Universiteit Leiden,
Leiden, The Netherlands and the Quantum Software Consortium of the Netherlands \\
\and
COSIC, ESAT, KU Leuven, Belgium\\
\and
Department of Information Security and Communication Technology, Norwegian University of Science and Technology, Trondheim, Norway}

\maketitle

\vspace{-1ex}
\begingroup
  \makeatletter
  \def\@thefnmark{$\ast$}\relax
  \@footnotetext{\relax
This work was supported in part by the European Research Council (ERC) under the European Union’s Horizon 2020 research and innovation programme (grant agreement ISOCRYPT - No. 101020788) and by CyberSecurity Research Flanders
with reference number VR20192203.}
\endgroup
\vspace{-1ex}

\begin{abstract}
    We study a large family of generalized class groups of imaginary quadratic orders $O$ and prove that they act freely and (essentially) transitively on the set of primitively $O$-oriented elliptic curves over a field $k$ (assuming this set is non-empty) equipped with appropriate level structure. This extends, in several ways, a recent observation due to Galbraith, Perrin and Voloch for the ray class group. We show that this leads to a reinterpretation of the action of the class group of a suborder $O' \subseteq O$ on the set of $O'$-oriented elliptic curves, discuss several other examples, and briefly comment on the hardness of the corresponding vectorization problems.
\end{abstract}

\section{Introduction}

A current trend in isogeny-based cryptography is to study elliptic curve isogenies that respect certain \textit{level structure}, or additional information about the curves. 
This has two main catalysts. Firstly, the recently established rapid mixing properties of isogeny graphs of supersingular elliptic curves with a marked cyclic subgroup of order $N$ ($\Gamma_N^0$-level structure) have led to improved security foundations, e.g., of the distributed generation of supersingular elliptic curves with unknown endomorphism ring~\cite{arpin,secuer}; see~\cite{codogni} for a generalization of these rapid mixing results to arbitrary level structure. Secondly, Robert's unconditional break of SIDH~\cite{RobSIDH} has revealed that the problem of finding an isogeny between two elliptic curves with full $\Gamma_N$-level structure is dramatically easier than in the case of plain elliptic curves, at least for $N$ smooth and large enough compared to the degree of the isogeny. The security of several recently proposed variants of SIDH~\cite{msidh,festa} also reduces to leveled isogeny problems. Some of these can again be broken much more efficiently than in the unleveled case; see~\cite{defeomodular} for a recent, systematic discussion.


In this paper we find explicit generalized class groups which act on the set of isomorphism classes of elliptic curves with various types of level structure. In doing so, we connect the study of isogenies between elliptic curves with level structure to the study of class group actions in the oriented framework of Col\`o--Kohel~\cite{OSIDH} and Onuki~\cite{onuki}. Briefly recall that, for $K$ an imaginary quadratic field, a $K$-orientation on an elliptic curve $E$ over a field $k$, say of positive characteristic $p$, is an embedding $\iota : K \hookrightarrow \End^0(E)$
into the endomorphism algebra of $E$ (assuming that such an embedding exists). For an order $O$ of $K$, such an orientation is a primitive $O$-orientation if $O = \iota^{-1}\End(E)$. 
For a fixed order $O \subseteq K$, the set $\Ell_k(O)$ of primitively $O$-oriented elliptic curves up to isomorphism naturally comes equipped with a free and (essentially) transitive action of the class group $\cl_O$ by  isogenies, see Section~\ref{ssec:CMaction} for more details. Isogenies arising from this class group action are called horizontal. For suitable parameters, this is considered a cryptographic group action, underpinning  constructions like CRS~\cite{couveignes,RostStol}, CSIDH~\cite{csidh,csurf} and SCALLOP~\cite{scallop,scallopHD}.

Level structures have sneaked up in the oriented setting before, although the situation is more diffuse than in the non-oriented case. E.g., for $N$ any prime different from $p$ that splits in $O$, it is well-known that horizontal $N$-isogenies automatically preserve the 
two eigenspaces of any generator $\sigma$ of $O$ acting on the $N$-torsion~\cite{msidh_artificial,defeomodular}; this can be seen as level structure 
amounting to the specification of two independent subgroups of order $N$. In~\cite{chenusmith}, Chenu and Smith study ideal class groups acting on supersingular elliptic curves $E / \F_{p^2}$ together with an $N$-isogeny to their Frobenius conjugate; this can be viewed as $\Gamma^0_N$-level structure, see also~\cite{xiao2023oriented}.
However, our starting point is a recent observation due to Galbraith, Perrin and Voloch~\cite{GPV} in the case of supersingular elliptic curves over $\F_p$, which is the setting of CSIDH: Just as $\cl_{\mathbb{Z}[\sqrt{-p}]}$ acts on the set of supersingular elliptic curves over $\F_p$ with $\F_p$-rational endomorphism ring $\Z[\sqrt{-p}]$, its ray class group for modulus $(N)$ acts on supersingular elliptic curves over $\F_p$ with full $\Gamma_N$-level structure.
In \cite{GPV}, the authors observe that although using supersingular elliptic curves over $\F_p$ equipped with such level structure yields a larger key space for the usual CSIDH parameters,  the security of such an enhanced protocol immediately reduces to that of the original CSIDH protocol, so this does not provide an advantage. We generalize this observation and contrast it with class group actions of suborders as in \cite{scallop}.

Our goal is to analyze to what extent the observation by Galbraith et al.\ is part of a bigger story. 
Instead of starting from a type of level structure and trying to devise a corresponding class group action, we invert the viewpoint and start from the action of a \emph{generalized class group}, subsequently finding the correct level structure in order to have a group action. In Section~\ref{sec:background} we provide an overview of the relevant background on elliptic curves with level structure, orientations, and generalized class groups. 
Our main results are discussed in Section~\ref{sec:gen_CM_action}, where we study a large family of generalized class groups, study their properties, and show that they act freely and (essentially) transitively on oriented elliptic curves with suitable level structure. We discuss several interesting examples, one of which sheds a new light on actions by class groups of non-maximal orders.
Finally, in Section~\ref{sec:security}, we discuss the hardness of the vectorization problem for our generalized class group actions. 

\subsection*{Acknowledgments} We thank the anonymous reviewers of the International Workshop on the Arithmetic of Finite Fields 2024 for several helpful comments. When preparing the final version of this paper, we were informed that Derek Perrin has independently been working on related ideas~\cite{perrin}. 

\section{Background}\label{sec:background}

\subsection{Elliptic Curves with Level Structure}
In this section, fix an integer $N\geq 2$ and a field $k$ such that $\charac k\nmid N$, and let $E$ be an elliptic curve over $k$. Let $E[N] \cong \mathbb{Z}/N\mathbb{Z}\times \mathbb{Z}/N\mathbb{Z}$ denote the $N$-torsion group of $E$. 
\begin{definition}[Level structure]\label{def:level-structure-classic}
    Let $\Gamma$ be a subgroup of $\GL_2(\mathbb{Z}/N\mathbb{Z})$. A $\Gamma$-level structure on an elliptic curve $E$ is a choice of isomorphism $\Phi:\mathbb{Z}/N\mathbb{Z}\times \mathbb{Z}/N\mathbb{Z} \cong E[N]$, up to pre-composition with an element of $\Gamma$. We denote this by the triple $(E,\Phi,\Gamma)$, just writing $(E,\Phi)$ when $\Gamma$ is understood from context.
\end{definition}

Choosing such an isomorphism $\Phi: \mathbb{Z}/N\mathbb{Z}\times \mathbb{Z}/N\mathbb{Z} \cong E[N]$ amounts to specifying a basis $P, Q \in E[N]$ and considering it up to base change by matrices from the prescribed group $\Gamma \subseteq \GL_2(\Z/(N))$. 
An isogeny $\varphi : E_1 \to E_2$ respects the level structures $P_1, Q_1$ resp.\ $P_2, Q_2$ if and only if $\varphi(P_1) = Q_1$, $\varphi(P_2)=Q_2$, modulo the action of $\Gamma$. 
Commonly studied examples are
\[ \Gamma_N^0 = \left\{ \begin{pmatrix} \ast & \ast \\ 0 & \ast \end{pmatrix} \right\},\quad \Gamma_N^{0,0} = \left\{ \begin{pmatrix} \ast & 0 \\ 0 & \ast \end{pmatrix} \right\}, \quad \Gamma_N^1 = \left\{ \begin{pmatrix} 1 & \ast \\ 0 & \ast \end{pmatrix} \right\} , \quad \Gamma_N = \left\{ \begin{pmatrix} 1 & 0 \\ 0 & 1 \end{pmatrix} \right\}  \]
where the level structure corresponds to specifying a cyclic subgroup of order $N$, two independent cyclic subgroups of order $N$, a point of order $N$, or a basis of $E[N]$ (``full level structure"), respectively.

\begin{example}\label{ex:level_structure}
    Fix an isomorphism $\Phi:(\mathbb{Z}/N\mathbb{Z}\times \mathbb{Z}/N\mathbb{Z})\to E[N]$ 
and let $P = \Phi(1,0)$, $Q = \Phi(0,1)$.
For any 
\[ h = \begin{pmatrix}
    a & b \\ 0 & d
\end{pmatrix}\in \Gamma^0_N, \] 
$\Phi\circ h$ sends $(1,0)\mapsto aP$ and $(0,1)\mapsto bP + dQ$. The basis element $(0,1)$ can be sent anywhere in the group $E[N]$ via $\Gamma_N^0$, but the image of $(1,0)$ is always in $\langle P\rangle$. In this sense $\Phi$ fixes a choice of cyclic subgroup $\langle P \rangle$. 
\end{example}

\subsection{Congruence subgroups and generalized class groups}

Our main references for this and the next section are~\cite{cox,kopplagarias2022}. For $O$ an order in an imaginary quadratic number field $K$, 
a \emph{modulus in $O$} is a non-zero integral ideal $\mfrak \subseteq O$. We denote by  $I_O$ the group of proper fractional ideals in $K$, which we recall are lattices $\afrak \subseteq K$ such that
$ \{ \, \alpha \in K \, | \, \alpha \afrak \subseteq \afrak \, \} = O$. 
Let $P_O$ be the subgroup of principal fractional ideals, i.e., fractional ideals of the form $\alpha O$ with $\alpha \in K \setminus \{ 0 \}$. Then the class group of $O$ is the quotient $\cl_O = I_O/P_O$. 

It can be shown that for any choice of modulus $\mfrak$, every class in $\cl_O$ contains an ideal $\afrak \subseteq O$ that is coprime with $\mfrak$, i.e., $\afrak + \mfrak = O$~\cite[Cor.\,7.17]{cox}. Equivalently, if we define
$I_O(\mfrak) \subseteq I_O$ to be the subgroup generated by all proper integral ideals that are coprime with $\mfrak$, 
and we let $P_O(\mfrak) = P_O \cap I_O(\mfrak)$,\footnote{Equivalently, $P_O(\mfrak)$ is the subgroup of $P_O$ generated by all principal integral ideals of $O$ that are coprime with $\mfrak$; see~\cite[\S4.3]{kopplagarias2022}, or see~\cite[Pf.~of~Prop.\,7.19]{cox} for the case $\mfrak = fO$ with $f \in \Z_{>0}$.} then the natural map $I_O(\mfrak)/P_O(\mfrak) \to \cl_O : [\afrak] \mapsto [\afrak]$ is an isomorphism.

A \emph{ray} for modulus $\mfrak$ is a principal fractional ideal of the form
\[ \alpha O \text{ with $\alpha \in K^\ast$ such that $\alpha \equiv 1 \bmod \mfrak$}, \]
where a congruence $\alpha \equiv \beta \bmod \mfrak$ means that for any $\alpha_1, \alpha_2, \beta_1, \beta_2 \in O$ such that $\alpha = \alpha_1/\alpha_2$ and $\beta = \beta_1 / \beta_2$ we have $\alpha_1 \beta_2 - \alpha_2 \beta_1 \in \mfrak$; see~\cite[Def.~4.2]{kopplagarias2022}.

The rays form a subgroup $P_{O, 1}(\mfrak) \subseteq P_O(\mfrak)$ called the \emph{ray group} for modulus $\mfrak$. Any group $H$ such that
$P_{O, 1}(\mfrak) \subseteq H \subseteq I_O(\mfrak)$
is then called a \emph{congruence subgroup} for modulus $\mfrak$. 
The corresponding quotient $I_O(\mfrak)/H$ is known as a \emph{generalized class group}; such groups play a crucial role in the study of abelian extensions of $K$. In the extremal case $H = P_{O, 1}(\mfrak)$
one ends up with the ray class group $\cl_{O, 1}(\mfrak)$. It was observed by Galbraith et al.\ that $\cl_{\Z[\sqrt{-p}],1}(N \Z[\sqrt{-p}])$ acts freely on the set of supersingular elliptic curves with $\F_p$-rational endomorphism ring $\Z[\sqrt{-p}]$ equipped with full $N$-level structure~\cite[Prop.\,2.5]{GPV}, where $N$ denotes an integer coprime to $p$; we will generalize this in
Section~\ref{sec:gen_CM_action}, but first we discuss how class groups of non-maximal orders are related to generalized class groups of their superorders.


\subsection{Class groups of suborders}


Let $O' \subseteq O$ be orders in $K$. Then $O' = \Z + fO$ for a unique positive integer $f$ called the \emph{conductor} of $O'$ relative to $O$. The following result shows that $\cl_{O'}$ can also be viewed as a generalized class group of $O$:


\begin{theorem} \label{thm:clOisgeneralized}
Let
\[ P_{O, \Z}(fO) = \{ \, \alpha O \, | \, \alpha \in K^\ast \text{ and } \alpha \equiv g \bmod fO \text{ for some $g \in \Z$ coprime with $f$} \, \}. \]
Then the map
\[ \cl_{O'} \to I_O(fO)/P_{O, \mathbb{Z}}(f O) : [\afrak] \mapsto [\afrak O], \]
where it can be assumed that $[\afrak]$ is represented by an integral $O'$-ideal that is coprime with $f O'$,
is an isomorphism of groups.
\end{theorem}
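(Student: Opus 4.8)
The plan is to construct the isomorphism in two stages, using the classical correspondence between ideals of $O'$ coprime to the conductor and ideals of $O$ coprime to the conductor, and then identifying the image of the principal ideals. First I would recall the standard result (e.g.\ \cite[Prop.\,7.20]{cox}, extended to general $f$ and $O$ as in \cite{kopplagarias2022}) that the maps $\afrak \mapsto \afrak O$ and $\mathfrak{b} \mapsto \mathfrak{b} \cap O'$ are mutually inverse, multiplicative bijections between the monoid of integral $O'$-ideals coprime to $fO'$ and the monoid of integral $O$-ideals coprime to $fO$, and that these extend to an isomorphism $\phi: I_{O'}(fO') \xrightarrow{\sim} I_O(fO)$. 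Since every class in $\cl_{O'}$ is represented by an integral $O'$-ideal coprime to $fO'$ (by the remark after Theorem~\ref{thm:clOisgeneralized}'s analogue, i.e.\ \cite[Cor.\,7.17]{cox}), the natural surjection $I_{O'}(fO') \to \cl_{O'}$ is onto, and composing a set-theoretic section with $\phi$ and the quotient map $I_O(fO) \to I_O(fO)/P_{O,\Z}(fO)$ gives a candidate map; the real content is that it is well-defined and has the right kernel.

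The key step is therefore to show $\phi\big(P_{O'} \cap I_{O'}(fO')\big) = P_{O,\Z}(fO)$, i.e.\ that $\phi$ carries principal $O'$-ideals coprime to the conductor exactly onto the subgroup $P_{O,\Z}(fO)$. For the inclusion "$\subseteq$": if $\alpha' O'$ is such an ideal with $\alpha' \in K^\ast$, one can write $\alpha' = \beta_1/\beta_2$ with $\beta_i \in O'$ coprime to $f$ (viewing coprimality in $O'/fO' \cong \Z/f\Z$, using $O' = \Z + fO$); then $\phi(\alpha' O') = \alpha' O$, and I must check $\alpha' \equiv g \bmod fO$ for some $g \in \Z$ coprime with $f$. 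This follows because $O'/fO \cong \Z/f\Z \times$ (something), more precisely $O' = \Z + fO$ means every element of $O'$ is congruent mod $fO$ to a rational integer, and coprimality to $f$ is preserved; so $\beta_i \equiv g_i \bmod fO$ with $g_i \in \Z$ coprime to $f$, whence $\alpha' \equiv g_1 g_2^{-1} \bmod fO$ with the inverse taken mod $f$. For "$\supseteq$": given $\alpha O \in P_{O,\Z}(fO)$ with $\alpha \equiv g \bmod fO$, $g \in \Z$ coprime to $f$, I would argue that $\alpha O$ is the $O$-extension of the $O'$-ideal $(\alpha O) \cap O'$, and that after scaling $\alpha$ by a suitable element congruent to $1 \bmod fO$ (which does not change the class and keeps us inside $P_{O,\Z}$) one may assume $\alpha \in O'$; then $\alpha$ is coprime to $f$ in $O'$, so $\alpha O'$ is a principal $O'$-ideal coprime to $fO'$ mapping to $\alpha O$.

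Finally, I would assemble these facts: $\phi$ induces an isomorphism $I_{O'}(fO')/P_{O'}(fO') \xrightarrow{\sim} I_O(fO)/P_{O,\Z}(fO)$, and precomposing with the canonical isomorphism $\cl_{O'} \xrightarrow{\sim} I_{O'}(fO')/P_{O'}(fO')$ (the statement recalled in the excerpt with $\mfrak = fO'$) yields exactly the claimed map $[\afrak] \mapsto [\afrak O]$; well-definedness on the level of $\cl_{O'}$ is then automatic, and injectivity/surjectivity follow from those of $\phi$ together with the kernel computation. I expect the main obstacle to be the careful bookkeeping in the kernel computation --- in particular, making precise the passage between congruences modulo $fO$ and modulo $fO'$, the reduction of a general $\alpha \in K^\ast$ to a representative in $O'$, and checking that the "ray-type" adjustments stay within $P_{O,\Z}(fO)$ --- since the definition of congruence modulo a modulus via arbitrary fractional representatives (as in \cite[Def.\,4.2]{kopplagarias2022}) requires some care to handle uniformly. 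The rest is a routine verification that the various multiplicative bijections are compatible.
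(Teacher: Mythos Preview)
Your proposal is correct and follows essentially the same route as the paper: the paper's proof simply says ``this is the relative version of \cite[Prop.\,7.22]{cox}, with the same proof,'' and what you have written is precisely an unpacking of Cox's argument (bijection $I_{O'}(fO')\cong I_O(fO)$ from \cite[Prop.\,7.20]{cox}, followed by identifying the image of the principal ideals as $P_{O,\Z}(fO)$) in the relative setting $O'\subseteq O$. One small slip: the parenthetical $O'/fO'\cong\Z/f\Z$ should read $O'/fO\cong\Z/f\Z$ (the former has order $f^2$), but your actual argument only uses the correct fact that every element of $O'=\Z+fO$ is congruent to an integer modulo $fO$.
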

\begin{proof}
    This is the relative version of~\cite[Prop.\,7.22]{cox}, with the same proof. \qed
\end{proof}



The following classical exact sequence provides a foundational framework for working with generalized class groups of orders.


\begin{theorem}[{\cite[Theorem 5.4]{kopplagarias2022}}]\label{thm:kopplagarius}
    Let $K$ be an imaginary quadratic number field with ring of integers $O_K$ and orders $O'\subseteq O\subseteq O_K$. Let $f$ be the conductor of $O'$ relative to $O$. Then, the following sequence is exact:
        \begin{center}
    \begin{tikzcd}
        1\arrow[r] & O^\times/(O')^\times \arrow[r] & (O/fO)^\times/(O'/fO)^\times \arrow[r] & \cl_{O'}\arrow[r] & \cl_O\arrow[r] & 1.
    \end{tikzcd}
    \end{center}
\end{theorem}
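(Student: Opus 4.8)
The plan is to run everything through the two reinterpretations already recorded in Section~\ref{sec:background}, namely $\cl_O \cong I_O(fO)/P_O(fO)$ and, by Theorem~\ref{thm:clOisgeneralized}, $\cl_{O'} \cong I_O(fO)/P_{O,\Z}(fO)$, where $P_{O,\Z}(fO) = \{\,\alpha O : \alpha \equiv g \bmod fO \text{ for some } g \in \Z \text{ coprime with } f\,\}$. Since $P_{O,\Z}(fO) \subseteq P_O(fO)$, under these isomorphisms the extension map $\cl_{O'}\to\cl_O$, $[\afrak]\mapsto[\afrak O]$, becomes the natural projection $I_O(fO)/P_{O,\Z}(fO) \twoheadrightarrow I_O(fO)/P_O(fO)$ (the relevant square commutes since all three maps are induced by $\afrak\mapsto\afrak O$), which is surjective with kernel $P_O(fO)/P_{O,\Z}(fO)$; and the map $(O/fO)^\times/(O'/fO)^\times \to \cl_{O'}$ I would take to be $\overline\alpha \mapsto [(\alpha O)\cap O']$ for any lift $\alpha\in O$ with $\alpha O$ coprime to $fO$, which under the isomorphisms is $\overline\alpha\mapsto[\alpha O]$ and lands in the subgroup $P_O(fO)/P_{O,\Z}(fO)$. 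Granting this, exactness at $\cl_{O'}$ and $\cl_O$ is immediate, and the problem reduces to producing a natural exact sequence
\[ 1 \longrightarrow O^\times/(O')^\times \longrightarrow (O/fO)^\times/(O'/fO)^\times \longrightarrow P_O(fO)/P_{O,\Z}(fO) \longrightarrow 1 \]
in which the middle map is $\overline\alpha\mapsto[\alpha O]$.

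For the middle term I would pass through the ray group $P_{O,1}(fO)$. Reduction modulo $fO$ defines a surjection $P_O(fO) \twoheadrightarrow (O/fO)^\times/\overline{O^\times}$, $\alpha O\mapsto\overline\alpha \bmod\overline{O^\times}$, well defined because replacing the generator $\alpha$ by $u\alpha$ ($u\in O^\times$) only changes $\overline\alpha$ by $\overline{O^\times}$; its kernel is exactly $P_{O,1}(fO)$, since $\overline\alpha\in\overline{O^\times}$ lets one rescale $\alpha$ to an element $\equiv 1\bmod fO$. (The only technical point, handled exactly as in the proof of Theorem~\ref{thm:clOisgeneralized} and in~\cite[\S7]{cox}, is that any $\alpha\in K^\ast$ with $\alpha O$ coprime to $fO$ can be written $\alpha=\beta/\gamma$ with $\beta,\gamma\in O$ both coprime to $f$, so that $\overline\alpha\in(O/fO)^\times$ makes sense.) By the very description of $P_{O,\Z}(fO)$, it is the preimage in $P_O(fO)$ of the image of $(\Z/f\Z)^\times$ in $(O/fO)^\times/\overline{O^\times}$, so the surjection descends to an isomorphism
\[ P_O(fO)/P_{O,\Z}(fO) \;\xrightarrow{\ \sim\ }\; (O/fO)^\times\big/\bigl(\overline{O^\times}\cdot(\Z/f\Z)^\times\bigr). \]
Since $O'=\Z+fO$ and $\Z\cap fO=f\Z$, reduction gives a ring isomorphism $O'/fO\cong\Z/f\Z$, hence $(O'/fO)^\times=(\Z/f\Z)^\times$ as a subgroup of $(O/fO)^\times$; thus the right-hand side is the cokernel of $O^\times\to(O/fO)^\times/(O'/fO)^\times$, and we obtain exactness of $O^\times\to(O/fO)^\times/(O'/fO)^\times\to P_O(fO)/P_{O,\Z}(fO)\to 1$.

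What remains --- the one step where the hypothesis $O'=\Z+fO$ is genuinely used, and which I expect to be the real (if short) crux --- is the identification $\ker\bigl(O^\times\to(O/fO)^\times/(O'/fO)^\times\bigr)=(O')^\times$. If $u\in(O')^\times$, writing $u=g+f\omega$ with $g\in\Z$, $\omega\in O$ gives $\overline u=\overline g$, and $\overline g\in(O'/fO)^\times=(\Z/f\Z)^\times$ forces $\gcd(g,f)=1$, so $u$ lies in the kernel. Conversely, if $u\in O^\times$ with $u\equiv g\bmod fO$, $g\in\Z$, $\gcd(g,f)=1$, then $u\in\Z+fO=O'$, and choosing $g^\ast\in\Z$ with $gg^\ast\equiv1\bmod f$ yields $u^{-1}\equiv g^\ast\bmod fO$, so $u^{-1}\in O'$ as well; hence $u\in(O')^\times$. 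Splicing the resulting short exact sequence into the projection from the first paragraph gives the asserted five-term sequence. (When $f=1$ one has $O'=O$ and every term is trivial.)

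Finally, for context I would note a more conceptual route that yields the same statement: the square with corners $O'$, $O$, $O'/fO\cong\Z/f\Z$, $O/fO$ is Cartesian with $O\twoheadrightarrow O/fO$ surjective (indeed $fO$ is precisely the conductor of $O'$ in $O$), so the Mayer--Vietoris exact sequence of units and Picard groups applies; using that $\mathrm{Pic}$ of the Artinian rings $\Z/f\Z$ and $O/fO$ vanishes while $\mathrm{Pic}(O')=\cl_{O'}$ and $\mathrm{Pic}(O)=\cl_O$, it reads $1\to(O')^\times\to O^\times\times(O'/fO)^\times\to(O/fO)^\times\to\cl_{O'}\to\cl_O\to1$, and quotienting by $(O'/fO)^\times$ reproduces exactly the asserted sequence. \qed
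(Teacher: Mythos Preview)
Your proof is correct. The paper's own ``proof'' is simply a citation: it observes that the statement is a special case of \cite[Thm.\,5.4]{kopplagarias2022} upon choosing $\mathfrak{m}=O$, $\mathfrak{m}'=O'$, $\mathfrak{d}=fO'$. You instead give a self-contained argument, and in fact two.

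Your main route---pushing everything into $I_O(fO)$ via Theorem~\ref{thm:clOisgeneralized}, identifying $\ker(\cl_{O'}\to\cl_O)$ with $P_O(fO)/P_{O,\Z}(fO)$, and then computing the latter as $(O/fO)^\times/\bigl(\overline{O^\times}\cdot(\Z/f\Z)^\times\bigr)$ through the ray quotient $P_O(fO)/P_{O,1}(fO)\cong(O/fO)^\times/\overline{O^\times}$---is exactly the strategy the paper later uses to prove its generalization, Proposition~\ref{prop:exact-sequence-general} (following \cite[Thm.\,7.24]{cox}). So you have essentially anticipated that later argument in the special case $\Lambda=\Z$, $\mfrak=fO$; the paper even records explicitly in the remark after Proposition~\ref{prop:exact-sequence-general} that this specialization recovers Theorem~\ref{thm:kopplagarius}. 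Your Milnor/Mayer--Vietoris alternative via the conductor square is a genuinely different and more structural proof that neither the paper nor its cited sources take; it has the virtue of making transparent why the Artinian quotients contribute only unit groups, at the cost of invoking a $K$-theoretic black box. One small simplification in your kernel computation: once $u\in O^\times$ lies in $O'$, the inverse $u^{-1}$ is automatic, since $u$ satisfies a monic integral quadratic with constant term $\pm1$, hence $u^{-1}\in\Z[u]\subseteq O'$; no need for the explicit $g^\ast$.
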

\begin{proof}
    This is a generalization of the classical exact sequences with $O = O_K$ {\cite[Theorem I.12.12]{neukirch}}. The proof follows from \cite[Thm.\,5.4]{kopplagarias2022} by choosing $\mathfrak{m} = O,\mathfrak{m}' = O',\mathfrak{d} = fO'$ (notice that the roles of $O$ and $O'$ are reversed). \qed
\end{proof}

\subsection{Class group actions on sets of elliptic curves} \label{ssec:CMaction}


In this section, we assume to be working in characteristic $p > 0$ and let $k \subset \overline{\F_p}$. We describe the class group actions on certain sets of elliptic curves over finite fields. We begin following an approach of Waterhouse \cite{Wat69}, who provides such a group action for isomorphism classes of ordinary elliptic curves over finite fields. The endomorphism ring of an ordinary elliptic curve is isomorphic to an imaginary quadratic order $O$. 
The class group of the endomorphism ring is used to define a group action on the set of isomorphism classes of curves with isomorphic endomorphism rings. We write $\End(E)$ to denote the ring of all endomorphisms of $E/k$, defined over $\overline{k} = \overline{\F_p}$. When it arises, we write $\End_k(E)$ to specify the subring of endomorphisms of $E$ defined over $k$.

\begin{theorem}[{\cite[Theorem 4.5]{Wat69}}]\label{thm:waterhouse}
Let $E$ be an ordinary elliptic curve over a field $k \subset \overline{\F_p}$ having endomorphism ring $\End(E) \cong O$. Then the class group of $O$ acts freely and transitively on the set of elliptic curves over $k$ with endomorphism ring isomorphic to $O$. 
\end{theorem}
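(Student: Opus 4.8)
The plan is to construct the action explicitly using the standard quotient-isogeny recipe and then verify freeness and transitivity by reducing to Tate's isogeny theorem together with the structure of $\mathrm{End}(E)\otimes\Q$. First I would fix an ordinary elliptic curve $E_0/k$ with $\mathrm{End}(E_0)\cong O$ and identify $\mathrm{End}(E_0)$ with $O$ once and for all. For an integral ideal $\afrak\subseteq O$ coprime to $p$, define the $\afrak$-torsion subgroup $E_0[\afrak]=\bigcap_{\alpha\in\afrak}\ker\alpha$, which is a finite subgroup scheme stable under $\mathrm{End}(E_0)$; set $\afrak\ast E_0 := E_0/E_0[\afrak]$, with the quotient isogeny $\varphi_\afrak\colon E_0\to\afrak\ast E_0$. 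One checks that $E_0[\afrak]$, hence $\afrak\ast E_0$ and $\varphi_\afrak$, is defined over $k$ because $\afrak$ is Galois-stable, and that $\mathrm{End}(\afrak\ast E_0)\cong O$ again (the endomorphism ring cannot grow since it injects into $K$ and still contains a copy of $O$ via $\varphi_\afrak$, and it cannot shrink for the symmetric reason using the dual ideal). Then I would show: (i) $\afrak\ast E_0$ depends, up to $k$-isomorphism, only on the class $[\afrak]\in\cl_O$, using that $\alpha O\ast E_0\cong E_0$ via the isogeny induced by $\alpha$; (ii) $(\afrak\bfrak)\ast E_0\cong\afrak\ast(\bfrak\ast E_0)$, by comparing kernels of composed isogenies; and (iii) every ideal class contains an integral representative coprime to $p$, so the action is everywhere defined. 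This gives a well-defined action of $\cl_O$ on the set $S$ of $k$-isomorphism classes of ordinary elliptic curves over $k$ with endomorphism ring isomorphic to $O$.

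For transitivity, take $E_1,E_2\in S$. Since they are isogenous over $\overline{\F_p}$ (equal endomorphism algebra, and ordinary curves over finite fields are isogenous iff they have the same Frobenius trace — here one needs $E_1,E_2$ to actually lie over a common finite field, which is where the hypothesis $k\subset\overline{\F_p}$ is used: everything descends to a finite subfield), there is an isogeny $\psi\colon E_1\to E_2$; one may assume $\deg\psi$ is coprime to $p$ after stripping off the inseparable (Frobenius) part, which is harmless because ordinary Frobenius is an endomorphism up to twist. The subgroup $\ker\psi\subseteq E_1$ together with the identification $\mathrm{End}(E_1)\cong O$ determines a proper $O$-ideal $\afrak$ with $E_1[\afrak]=\ker\psi$; here one invokes that, for $m$-torsion with $\gcd(m,p)=1$, $E_1[m]$ is a free $O/mO$-module of rank one (as $\mathrm{End}(E_1)$ acts faithfully and the $\ell$-adic Tate modules are free of rank one over $O\otimes\Z_\ell$ for ordinary $E_1$), so that subgroups of $E_1[m]$ stable under $O$ correspond bijectively to ideals of $O$ containing $m$. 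Then $\afrak\ast E_1\cong E_2$, proving transitivity.

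For freeness, suppose $\afrak\ast E_0\cong E_0$ for some integral $\afrak$ coprime to $p$; I must show $\afrak$ is principal. The composition $E_0\xrightarrow{\varphi_\afrak}\afrak\ast E_0\xrightarrow{\sim}E_0$ is an endomorphism $\alpha\in\mathrm{End}(E_0)=O$ with $\ker\alpha=E_0[\afrak]$. Comparing with $\ker(\text{mult. by the ideal }\alpha O)=E_0[\alpha O]$ and using again the rank-one freeness of $E_0[m]$ over $O/mO$ (with $m$ a suitable integer killed by both $\afrak$ and $\alpha$), one gets $\afrak=\alpha O$, so $[\afrak]$ is trivial in $\cl_O$. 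Combining, the action is free and transitive, i.e., $S$ is a principal homogeneous space under $\cl_O$.

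The main obstacle is the rank-one freeness of the torsion as an $O/mO$-module (equivalently, the Tate module as a module over $O\otimes\Z_\ell$), which underlies both the dictionary between $O$-stable subgroups and ideals and the verification of freeness; at the primes $\ell$ dividing the conductor of $O$ this requires genuine care, since $O\otimes\Z_\ell$ is not maximal and one must use that the orientation is \emph{primitive} (i.e., $O$ is the full $\mathrm{End}$, not a suborder) to rule out the torsion being a non-free module. A secondary point to handle cleanly is the descent to a finite field so that Honda–Tate / Tate's isogeny theorem applies to give the isogeny $\psi$ in the transitivity step; over a general $k\subset\overline{\F_p}$ one first spreads $E_1,E_2$ out over a common $\F_q$ and notes that having isomorphic endomorphism ring forces equal Frobenius traces, hence isogeny.
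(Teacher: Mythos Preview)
The paper does not actually prove this theorem: it quotes it from Waterhouse and, in the paragraph that follows, only remarks that Waterhouse's argument proceeds by verifying that every ideal of $\End(E)$ is a \emph{kernel ideal}, i.e., is recovered from its torsion subgroup as $\{\alpha\in O:\alpha(E[\afrak])=0\}$. Your proposal is in the same spirit: your repeated use of ``$E[m]$ is free of rank one over $O/mO$'' is precisely the content of the kernel-ideal property, and you correctly flag the primes dividing the conductor as the delicate case. So at the level of strategy you are aligned with what the paper points to.

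There is one genuine gap in your transitivity step. You claim that ``having isomorphic endomorphism ring forces equal Frobenius traces''. This is not true over a finite field: if $E/\F_q$ is ordinary with Frobenius trace $a$, its nontrivial quadratic twist has trace $-a$ but the same geometric endomorphism ring $O$. In $O$ the Weil $q$-numbers come in two pairs $\{\pm\pi,\pm\bar\pi\}$ with traces $\pm a$, giving two $\F_q$-isogeny classes sharing the same $O$. Horizontal isogenies stay inside one isogeny class, so $\cl_O$ cannot move you between them. Waterhouse's original statement is for a fixed isogeny class (equivalently, for curves with the same Frobenius element, not just the same abstract $O$), and the paper's formulation is slightly loose on this point. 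For your write-up you should either restrict to a single isogeny class from the outset, or work over $\overline{\F_p}$ where the twist distinction disappears. Once this is adjusted, your Honda--Tate/Tate step goes through and the rest of your argument is sound.
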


The ideal class group of $\End(E)$ acts on the set of isomorphism classes of ordinary elliptic curves with endomorphism rings isomorphic to the order $O$ in the following sense:

\begin{definition}\label{def:ideal_action}
    Let $E/k$ be an elliptic curve over a field $k \subset \overline{\F_p}$ with commutative endomorphism ring $\End(E)$. Take a proper integral ideal $I$ of $\End(E)$ with $N(I)$ coprime to $p$. Define 
    \[E[I]:= \bigcap_{\alpha\in I}\ker\alpha.\]
    As $I$ is a finitely generated $\Z$-module, the set $E[I]$ is a finite group. This finite group defines an isogeny $\varphi_I:E\to E/E[I]$ with kernel $E[I]$. Define 
    \[I\ast E:= E/E[I].\]
\end{definition}

Each ideal class contains an integral ideal representative which is of norm coprime to $p$. The principal ideals are generated by a single endomorphism, and so act trivially. In \cite[\S3]{Wat69}, Waterhouse establishes that, in the case where $E$ is ordinary, this is a free and transitive group action on the set of elliptic curves with endomorphism ring isomorphic to $\End(E)$. The proof goes through showing that ideals of $\End(E)$ satisfy certain properties qualifying them as \emph{kernel ideals}.

In the supersingular case, the situation is more complicated. If $E$ is supersingular, then $\End(E)$ is a noncommutative ring in a quaternion algebra. In particular, $\End(E)$ does not have a class \emph{group} of (left or right) ideals. There is a partial remedy and a full remedy. The partial remedy: if $k = \mathbb{F}_p$, then $\End_k(E)$ is isomorphic to an order in the imaginary quadratic field $\Q(\sqrt{-p})$. 
In this case, one can work with the class group of $\End_k(E)$ and use Definition~\ref{def:ideal_action}, just as in the ordinary case. The group action will be free and transitive (modulo a minor subtlety highlighted in the proof of~\cite[Thm.\,4.5]{Schoof_NonsingPlaneCubics}). However, if $k = \F_{p^n}$ for $n>1$ or $k = \overline{\F_p}$, we find ourselves in need of additional framework: orientations on supersingular elliptic curves are the full remedy. The remainder of this section deals with the supersingular case. 

Let $k \subset \overline{\F_p}$ and consider a supersingular elliptic curve $E/k$. 
By \cite{Silverman}, the endomorphism ring $\End(E)$ is isomorphic to a maximal order in the quaternion algebra $B_{p,\infty}:=\End(E)\otimes_\mathbb{Z}\mathbb{Q}$ ramified precisely at $p$ and $\infty$. 
Any non-scalar element $\alpha\in\End(E)\setminus\mathbb{Z}$ generates an imaginary quadratic order. Let $K$ be an imaginary quadratic field in which $p$ does not split. 
This condition gives the existence of an embedding of $K$ into $B_{p,\infty}$ \cite[Prop. 14.6.7]{Voight}.

\begin{definition}\label{def:Korientation}
    A \textit{$K$-orientation} on an elliptic curve $E$ is an embedding 
    \[\iota: K\hookrightarrow\End(E)\otimes_\mathbb{Z}\mathbb{Q}.\]
    For an order $O$ of $K$ such an embedding is called a primitive $O$-orientation if $\iota(O) = \iota(K)\cap \End(E)$.
    The pair $(E,\iota)$ is called a primitively $O$-oriented supersingular elliptic curve. 
    We denote by $\Ell_k(O)$ the set of primitively $O$-oriented elliptic curves over $k$. 
\end{definition}

\begin{remark}
When $E$ is supersingular as above, a $K$-orientation $\iota$ maps into a four-dimensional $\Q$-algebra.
In the case where the endomorphism ring of $E$ is commutative, Definition~\ref{def:Korientation} can still apply: the map $\iota$ defines an isomorphism $O\cong \End(E)$. Note that in both cases we call such a map a primitive $O$-orientation on $E$, to unify notation. 
\end{remark}

\begin{definition}[$K$-oriented isogeny]
A $K$-oriented isogeny is an isogeny $\varphi:(E_0,\iota_0)\to (E_1,\iota_{\varphi})$ between $K$-oriented elliptic curves such that $\varphi:E_0\to E_1$ as an isogeny of elliptic curves and $\iota_{\varphi}(-) = \frac{1}{\deg\varphi}\varphi\circ\iota_0(-)\circ \widehat{\varphi}$. 
\end{definition}
An isomorphism of elliptic curves is likewise a $K$-oriented isomorphism if it is of degree-1 and satisfies the above properties.

Via the Deuring lifting theorem, the theory of oriented supersingular elliptic curves is closely related to the theory of CM elliptic curves.

\begin{definition}
    There is an extension $L'$ of the ring class field $L$ of $O$ and a prime $\mathfrak{p}$ above $p$ in $O_{L'}$ such that every elliptic curve with CM by $O$ has a representative defined over $L'$ with good reduction at $\mathfrak{p}$. 
    Let $\Ell_{L'}(O)$ denote the set of isomorphism classes of elliptic curves with endomorphism ring isomorphic to $O$ and having good reduction over $\mathfrak{p}$. 
    Let $\rho:\Ell_{L'}(O)\to \Ell_k(O)$ denote the reduction map modulo $\mathfrak{p}$.
\end{definition}

\begin{definition}\label{def:group_action_orientedcurves}
    Let $(E,\iota)\in\Ell_k(O)$ and take an ideal $\afrak$ of $O$. Define the (group-theoretic) intersection:
    \[E[\iota(\afrak)] := \bigcap_{\alpha\in\afrak}\ker(\iota(\alpha)).\]
    Let $\varphi_{\afrak}$ denote a $K$-oriented isogeny of $(E,\iota)$ with kernel $E[\iota(\afrak)]$. 
    Such an isogeny $\varphi_{\afrak}:(E,\iota)\to (E_{\afrak},\iota_{\afrak})$ is unique up to $K$-oriented automorphism on the codomain.
    \\When $\#E[\iota(\afrak)] = N(\afrak)$, we define the action of $\afrak$ on $(E,\iota)$ to be:
    $\afrak *(E,\iota) = (E_{\afrak},\iota_{\afrak})$.
\end{definition}

Definition~\ref{def:group_action_orientedcurves} is precisely the supersingular analogue of Definition~\ref{def:ideal_action} that we need. The principal ideals $(\beta)$ of $O$ are generated by endomorphisms of $E$, and thus act trivially on $(E,\iota)$ since $\beta\in O$ commutes with the image of $\iota$ in $\End(E)$. The following theorem of Onuki completes the picture by providing the supersingular analogue of Theorem~\ref{thm:waterhouse}.

\begin{theorem}[{\cite[Theorem 3.4]{onuki}}] \label{onuki}
    When $p$ is not split in the imaginary quadratic field $K$ and $p$ is coprime to the conductor of the order $O$ of $K$, then Definition~\ref{def:group_action_orientedcurves} gives a free and transitive action of $\cl_O$ on $\rho(\Ell_{L'}(O))$.
\end{theorem}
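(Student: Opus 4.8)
The plan is to transport the classical complex-multiplication action across the reduction map $\rho$, via the Deuring lifting theorem. Recall first that the classical theory of complex multiplication equips $\Ell_{L'}(O)$ with a free and transitive $\cl_O$-action, which agrees with the kernel-ideal prescription of Definition~\ref{def:ideal_action}: analytically $\afrak \ast [\C/\Lambda] = [\C/\afrak^{-1}\Lambda]$, and for an integral proper ideal $\afrak$ with $N(\afrak)$ coprime to $p$ — and every class of $\cl_O$ contains such an $\afrak$ — this coincides with passing to the quotient by the torsion subgroup cut out by $\afrak$. I would then show that $\rho$ is equivariant with respect to this action and to the action of Definition~\ref{def:group_action_orientedcurves} on the target. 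Granting this, transitivity on $\rho(\Ell_{L'}(O))$ follows immediately from transitivity on $\Ell_{L'}(O)$ together with surjectivity of $\rho$ onto its image; in fact $\rho$ then becomes a bijection once freeness is known, since both sides are then free transitive $\cl_O$-sets of size $\#\cl_O$. Freeness I would prove separately, working entirely on the supersingular side.

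For the equivariance step, let $\tilde E/L'$ have CM by $O$ with good reduction $E$ at $\mathfrak p$. Since $p$ is not split in $K$, $E$ is supersingular, and since reduction of endomorphisms is an injective ring homomorphism it induces an embedding $\iota\colon K \cong \End(\tilde E)\otimes\Q \hookrightarrow \End(E)\otimes\Q = B_{p,\infty}$. The first thing to check is that $\iota$ is a \emph{primitive} $O$-orientation, i.e.\ $\iota(K)\cap\End(E) = \iota(O)$: the intersection is an order of $K$ containing $\iota(O)$, and one must argue that its conductor is prime to $p$ — this is precisely where the hypothesis that $p$ is coprime to the conductor of $O$ is used. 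Next, for integral proper $\afrak$ with $N(\afrak)$ coprime to $p$, the isogeny $\varphi_\afrak\colon \tilde E\to\tilde E_\afrak$ (defined over a finite extension of $L'$, which does not affect the reductions) has good reduction, and because reduction is injective on prime-to-$p$ torsion its reduction is an isogeny out of $E$ with kernel exactly $E[\iota(\afrak)]$ and codomain the reduction of $\tilde E_\afrak$; comparing the induced orientations yields $\rho(\afrak\ast\tilde E) = \afrak\ast\rho(\tilde E)$ as $K$-oriented curves. In particular the action on $\rho(\Ell_{L'}(O))$ is well-defined independently of the chosen lift, and $\rho$ is equivariant.

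For freeness, suppose $\afrak\ast(E,\iota) = (E,\iota)$ in $\Ell_k(O)$ with $\afrak$ integral proper of norm coprime to $p$. Composing $\varphi_\afrak\colon(E,\iota)\to(E_\afrak,\iota_\afrak)$ with a $K$-oriented isomorphism $(E_\afrak,\iota_\afrak)\xrightarrow{\sim}(E,\iota)$ produces a $K$-oriented endomorphism $\psi\in\End(E)$ of degree $N(\afrak)$ with $\ker\psi = E[\iota(\afrak)]$. Being $K$-oriented, $\psi$ commutes with $\iota(K)$; since the centralizer of the maximal subfield $\iota(K)$ in $B_{p,\infty}$ is $\iota(K)$ itself and $\psi$ is integral, primitivity forces $\psi = \iota(\beta)$ for some $\beta\in O$ with $N(\beta) = N(\afrak)$. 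Then $E[\iota(\afrak)] = \ker\iota(\beta) = E[\iota(\beta O)]$, and since an invertible $O$-ideal of norm prime to $p$ is recovered from the torsion subgroup it cuts out, via $\afrak = \{\alpha\in O : \iota(\alpha) \text{ vanishes on } E[\iota(\afrak)]\}$ (the kernel-ideal property), we conclude $\afrak = \beta O$, i.e.\ $[\afrak]$ is trivial in $\cl_O$. Hence every stabilizer is trivial.

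I expect the main obstacle to be the equivariance step: checking that the reduced orientation is primitive — the exact point at which the coprimality of $p$ and the conductor enters — and that the ideal-theoretic isogenies reduce as claimed, which rests on good-reduction theory for isogenies and the faithfulness of reduction on prime-to-$p$ torsion. One should also be careful about models and twists over $L'$ and about curves carrying extra automorphisms; this is the kind of ``minor subtlety'' already flagged above for the $\F_p$-rational ordinary case.
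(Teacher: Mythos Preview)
The paper does not supply its own proof of this statement; it is quoted verbatim as \cite[Theorem 3.4]{onuki} and left at that. Your sketch is a faithful reconstruction of Onuki's strategy: use the classical free and transitive $\cl_O$-action on $\Ell_{L'}(O)$, push it down along the reduction map $\rho$ via Deuring lifting, verify equivariance (with the primitivity check for the reduced orientation being precisely where the coprimality of $p$ and the conductor of $O$ is needed), and then argue freeness directly on the supersingular side using that the centralizer of $\iota(K)$ in $B_{p,\infty}$ is $\iota(K)$ itself together with primitivity and the kernel-ideal property. There is nothing to compare against in the present paper beyond noting that your outline matches the cited source.
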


To understand when this gives a free and transitive action on the set $\Ell_k(O)$ of primitively $O$-oriented elliptic curves, we need to understand the relationship between the sets $\rho(\Ell_{L'}(O))$ and its superset $\Ell_k(O)$:

\begin{corollary}[{\cite[Theorem 4.5]{Schoof_NonsingPlaneCubics},\cite[Theorem 4.4]{orientationsandcycles}}]
    If $p$ is ramified in the imaginary quadratic field $O\otimes_\mathbb{Z}\mathbb{Q}$, then $\cl_O$ acts freely and transitively on the set of primitively $O$-oriented supersingular elliptic curves over $\overline{\mathbb{F}_p}$.
    If $p$ is inert in the imaginary quadratic field $O\otimes_\mathbb{Z}\mathbb{Q}$, then $\cl_O$ has two orbits in the set of primitively $O$-oriented supersingular elliptic curves over $\overline{\mathbb{F}_p}$. 
\end{corollary}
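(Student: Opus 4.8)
The plan is to bootstrap from Onuki's Theorem~\ref{onuki}, which already gives a free and transitive action of $\cl_O$ on the subset $\rho(\Ell_{L'}(O)) \subseteq \Ell_{\overline{\F_p}}(O)$; in particular this subset is a single free orbit, and the whole job is to describe its complement. The extra ingredient is the $p$-power Frobenius. For $(E,\iota) \in \Ell_{\overline{\F_p}}(O)$, let $\pi_E \colon E \to E^{(p)}$ denote the Frobenius isogeny and push $\iota$ forward as in Definition~\ref{def:group_action_orientedcurves}, i.e.\ $\iota^{(p)}(\alpha) = \frac{1}{p}\,\pi_E \circ \iota(\alpha) \circ \widehat{\pi_E}$. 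Since $p$ is coprime to the conductor of $O$ one checks that $(E^{(p)},\iota^{(p)})$ is again primitively $O$-oriented, so $\Phi \colon (E,\iota) \mapsto (E^{(p)},\iota^{(p)})$ is a well-defined self-map of $\Ell_{\overline{\F_p}}(O)$; because supersingular $j$-invariants lie in $\F_{p^2}$ we have $E^{(p^2)} \cong E$, so $\Phi$ is an involution, and a direct check using that $\iota^{(p)}(\alpha)$ is the Frobenius twist of $\iota(\alpha)$ shows that $\Phi$ commutes with the $\cl_O$-action. Hence $\Phi$ permutes the $\cl_O$-orbits.

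Next I would invoke the cited results~\cite{Schoof_NonsingPlaneCubics,orientationsandcycles} — which rest on the Deuring lifting theorem together with the injectivity of $\rho$ on $\Ell_{L'}(O)$ — to the effect that
\[ \Ell_{\overline{\F_p}}(O) \;=\; \rho(\Ell_{L'}(O)) \;\cup\; \Phi(\rho(\Ell_{L'}(O))), \]
i.e.\ every primitively $O$-oriented supersingular curve over $\overline{\F_p}$ is either the reduction of an elliptic curve with CM by $O$ or the Frobenius twist of such a reduction. Consequently $\Ell_{\overline{\F_p}}(O)$ consists of at most two $\cl_O$-orbits, and it is a single orbit precisely when $\Phi$ stabilises $\rho(\Ell_{L'}(O))$.

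It remains to decide when that stability holds, and here I would separate the two cases. If $p$ is ramified in $K$, write $pO = \pfrak^2$ with $N(\pfrak) = p$ (recall $p$ is coprime to the conductor, so $\pfrak$ is a proper $O$-ideal); one checks $\#E[\iota(\pfrak)] = p = N(\pfrak)$, so $\varphi_{\pfrak}$ is defined, and as $E$ is supersingular its unique subgroup scheme of order $p$ is $\ker\pi_E$, whence $\varphi_{\pfrak}$ equals $\pi_E$ up to $K$-oriented isomorphism. Thus $\Phi(E,\iota) = \pfrak \ast (E,\iota)$ for every $(E,\iota)$ in the orbit, $\Phi$ stabilises $\rho(\Ell_{L'}(O))$, and $\cl_O$ therefore acts freely and transitively on all of $\Ell_{\overline{\F_p}}(O) = \rho(\Ell_{L'}(O))$. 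If $p$ is inert in $K$, suppose for contradiction that $\Phi(E,\iota) = \afrak \ast (E,\iota)$ for some integral $O$-ideal $\afrak$ with $N(\afrak)$ coprime to $p$ (every ideal class has such a representative, cf.\ \cite[Cor.\,7.17]{cox}). Composing the associated degree-$N(\afrak)$ oriented isogeny $(E,\iota) \to \Phi(E,\iota)$ with the oriented isogeny $\widehat{\pi_E} \colon \Phi(E,\iota) \to (E,\iota)$ of degree $p$ yields a nonzero oriented endomorphism of $(E,\iota)$; since the centraliser of $\iota(K)$ in $\End(E)$ is $\iota(O)$, this endomorphism equals $\iota(\gamma)$ for some $\gamma \in O$ with $N(\gamma) = p\,N(\afrak)$, so $\ord_p(N(\gamma)) = 1$; but when $p$ is inert $\ord_p(N(\gamma))$ is even for every $\gamma \in O$, a contradiction. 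Hence $\Phi$ carries $\rho(\Ell_{L'}(O))$ to a disjoint second orbit, and $\cl_O$ has exactly two orbits on $\Ell_{\overline{\F_p}}(O)$. I expect the main obstacle to be the displayed exhaustion statement — that the image of $\rho$ together with its Frobenius conjugate already covers $\Ell_{\overline{\F_p}}(O)$ — which is exactly the lifting-theoretic input supplied by the cited theorems; granted that, the orbit count is formal.
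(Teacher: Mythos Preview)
The paper does not supply its own proof of this corollary: it is stated with attributions to \cite[Theorem~4.5]{Schoof_NonsingPlaneCubics} and \cite[Theorem~4.4]{orientationsandcycles} and then merely unpacked in the paragraph that follows. Your sketch is therefore not competing with an argument in the paper but rather reconstructing the content of those references, and it does so along exactly the standard lines found there: Onuki's Theorem~\ref{onuki} gives one free orbit $\rho(\Ell_{L'}(O))$; the $p$-power Frobenius twist $\Phi$ commutes with the $\cl_O$-action and, together with the Deuring lifting input you flag, exhausts $\Ell_{\overline{\F_p}}(O)$; in the ramified case $\Phi$ is realised by the prime $\pfrak$ above $p$ and so fixes the orbit, while in the inert case a norm parity obstruction shows the two orbits are distinct.

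Two small points of care, neither a genuine gap. First, ``$E^{(p^2)}\cong E$ so $\Phi$ is an involution'' needs one more line: you must also check $\iota^{(p^2)}=\iota$, which follows once you choose a model of $E$ over $\F_{p^2}$ with all endomorphisms $\F_{p^2}$-rational, so that the $p^2$-Frobenius is the scalar $[\pm p]$ and centralises $\iota(O)$. Second, the assertion ``$\#E[\iota(\pfrak)]=p$'' is literally false at the level of geometric points on a supersingular curve (where $E[p](\overline{\F_p})=0$); what you mean, and what you correctly use immediately after, is that the \emph{scheme-theoretic} kernel has rank $p$ and coincides with $\ker\pi_E$. Since the paper's Definition~\ref{def:group_action_orientedcurves} is phrased group-theoretically and only defines $\afrak\ast(E,\iota)$ when $\#E[\iota(\afrak)]=N(\afrak)$, you should say explicitly that you are extending it scheme-theoretically for $\pfrak$.
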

Concretely, if $p$ is not inert in the imaginary quadratic field $K$ containing $O$, then $\cl_O$ acts freely and transitively on the set of isomorphism classes of primitively $O$-oriented elliptic curves $\Ell_k(O)$. If $p$ is split in $K$, $\Ell_k(O)$ is a set of ordinary elliptic curves. If $p$ is ramified, $\rho(\Ell_{L'}(O)) = \Ell_k(O)$ is a set of primitively $O$-oriented supersingular elliptic curves. If $p$ is inert, $\cl_O$ acts on $\rho(\Ell_{L'}(O))$, which is again a set of primitively $O$-oriented supersingular elliptic curves.

\section{Generalized class group actions} \label{sec:gen_CM_action}


Let $K$ be an imaginary quadratic number field and let $O$ be an order in $K$. Let $k \subset \overline{\F_p}$. In this section, if $p$ is inert in $K$, we fix the orbit $\rho(\Ell_{L'}(O))$ of $\cl_O$ and call it $\Ell_k(O)$ by an abuse of notation. 

Let $\mfrak$ be a modulus in $O$. Let $H$ be a congruence subgroup for $\mfrak$ that is contained in $P_O(\mfrak)$. Let
\[ \cl_H = I_O(\mfrak)/H \] 
be the corresponding generalized class group. Because $H \subseteq P_O(\mfrak)$ the map
\[ \cl_H \times \,  \Ell_k(O) \to \Ell_k(O) : ([\afrak], E) \mapsto \varphi_{\afrak}(E) = E / E[\afrak] \]
remains a well-defined group action. However, if $H \subsetneq P_O(\mfrak)$ then this no longer yields a free group action: the class of any ideal $\afrak \in P_O(\mfrak) \setminus H$ is a non-trivial element acting trivially. This creates room for an action of $\cl_H$ on elements of $\Ell_k(O)$ equipped with extra data, i.e., with \emph{$\mfrak$-level structure}, which we now define. 

\subsection{$\mfrak$-level structures}

Our starting observation is:

\begin{lemma}\label{lem:structure-m-torsion}
Let $O$ be an imaginary quadratic order and let $E \in \Ell_k(O)$. Suppose $\mfrak \subseteq O$ is a proper ideal of norm coprime with $p$.
Then
\[E[\mfrak]\cong O/\mfrak\]
as $O$-modules; in particular, they are are also isomorphic as groups.
\end{lemma}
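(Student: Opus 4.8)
The plan is to reduce the statement to a local computation at each prime dividing $N(\mfrak)$, then invoke a structure theorem for modules over the (local) order.

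\medskip

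\noindent\textbf{Setup and reductions.} First I would observe that both $E[\mfrak]$ and $O/\mfrak$ are finite $O$-modules annihilated by $N(\mfrak)$, which is coprime to $p$, so we may work prime-by-prime: writing $N(\mfrak) = \prod \ell^{e_\ell}$, the Chinese Remainder Theorem gives $O/\mfrak \cong \prod_\ell O_\ell/\mfrak O_\ell$ and correspondingly $E[\mfrak] \cong \prod_\ell E[\mfrak O_\ell]$ (where $\mfrak O_\ell$ means the $\ell$-primary part), so it suffices to treat the case where $\mfrak$ is $\ell$-primary for a single prime $\ell \neq p$. Next I would use that $E \in \Ell_k(O)$, i.e., $\iota^{-1}(\End E) = O$ (or $\End E \cong O$ in the ordinary case), to identify the $\ell$-adic Tate module $T_\ell E$ as a module over $O_\ell := O \otimes_{\Z} \Z_\ell$. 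The key structural input is that $T_\ell E$ is free of rank one over $O_\ell$. This is standard in the ordinary/CM case; in the primitively $O$-oriented supersingular case it follows from the fact that $\iota$ is primitive, so that $O_\ell$ is its own "$\ell$-adic saturation" inside the local endomorphism ring, combined with $T_\ell E$ being a faithful $O_\ell$-module of $\Z_\ell$-rank $2 = \operatorname{rank}_{\Z_\ell} O_\ell$; an $O_\ell$-lattice of $K_\ell$-dimension $1$ that is proper (its multiplier ring is exactly $O_\ell$) is necessarily free of rank one, since $\ell$-adically $O_\ell$ is a product of local rings over which finitely generated torsion-free modules of generic rank $1$ are free.

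\medskip

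\noindent\textbf{The main computation.} Granting that $T_\ell E \cong O_\ell$ as $O_\ell$-modules, I would then compute
\[
E[\mfrak O_\ell] \;=\; \{\, x \in E[\ell^\infty] : \iota(\alpha) x = 0 \ \forall \alpha \in \mfrak \,\} \;\cong\; (\mfrak O_\ell)^{-1}/O_\ell \;\Big/\; \text{(dual picture)},
\]
or more directly: under the identification $E[\ell^\infty] \cong (K_\ell/O_\ell)$ coming from $T_\ell E \cong O_\ell$ (so $E[\ell^\infty] \cong O_\ell \otimes \Q_\ell/\Z_\ell$), the subgroup killed by $\mfrak$ is exactly the $\mfrak$-torsion of $K_\ell/O_\ell$, which is $\mfrak^{-1}O_\ell / O_\ell$. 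Then $\mfrak^{-1}O_\ell/O_\ell \cong O_\ell/\mfrak O_\ell$ as $O_\ell$-modules: multiplication by any element of $\mfrak O_\ell$ generating it locally (or more robustly, the perfect pairing $O_\ell/\mfrak \times \mfrak^{-1}/O_\ell \to K_\ell/O_\ell$ together with local self-duality, or simply noting $\mfrak^{-1}/O_\ell$ and $O_\ell/\mfrak$ have the same finite length and the natural surjection/injection between twists forces an isomorphism since $\mfrak$ is an invertible $O_\ell$-ideal). Reassembling over all $\ell$ gives $E[\mfrak] \cong O/\mfrak$ as $O$-modules, and a fortiori as abelian groups.

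\medskip

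\noindent\textbf{Main obstacle.} The crux — and the step I would spend the most care on — is establishing that $T_\ell E$ is \emph{free} of rank one over $O_\ell$, rather than merely a proper $O_\ell$-lattice in $K_\ell$. When $O_\ell$ is maximal (equivalently $\ell \nmid$ conductor) this is automatic since $O_\ell$ is then a product of DVRs (or a DVR). When $\ell$ divides the conductor, $O_\ell$ is not a Dedekind domain, so "proper fractional ideal" does not immediately mean "free"; here one must use that $\mfrak$ being a \emph{proper} ideal of $O$ (hypothesis of the lemma, via Section~\ref{sec:background}'s definition of $I_O$) makes $\mfrak O_\ell$ invertible, and that $T_\ell E$ itself, having multiplier ring exactly $O_\ell$ by primitivity of the orientation, is an invertible $O_\ell$-module — and invertible modules over the semilocal ring $O_\ell$ are free since $\operatorname{Pic}$ of a semilocal ring is trivial. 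Once freeness is in hand, everything else is the routine "$\mfrak^{-1}/O \cong O/\mfrak$ for invertible $\mfrak$" computation.
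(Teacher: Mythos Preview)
Your proposal is correct and follows a genuinely different route from the paper. The paper never localizes or touches Tate modules: it quotes a lemma of Lenstra giving $E[\alpha]\cong O/\alpha O$ as $O$-modules for principal separable $\alpha O$, writes $\mfrak=NO+\alpha O$ with $N$ generating $\mfrak\cap\Z$, and then manufactures an $O$-module generator of $E[\mfrak]$ explicitly by applying Lenstra on the companion curve $E_{\mfrak}=E/E[\mfrak]$ to the principal ideal $N(\mfrak)O$ and pulling the resulting cyclic generator back through $\varphi_{\bar\mfrak}$, obtaining $P\in E[\mfrak]$ with $E[\mfrak]=\langle P,\delta(P)\rangle$ for $O=\Z[\delta]$. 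Your argument instead isolates ``$T_\ell E$ free of rank one over $O_\ell$'' as the single structural input and deduces $E[\mfrak]\cong\mfrak^{-1}/O\cong O/\mfrak$ locally from invertibility of $\mfrak$. One caution: your first-pass justification (``$O_\ell$ is a product of local rings over which torsion-free rank-one modules are free'') is not valid when $\ell$ divides the conductor---there $O_\ell$ is a single non-regular local ring and does admit non-free torsion-free rank-one modules---but your second pass in the ``Main obstacle'' paragraph (proper $\Rightarrow$ invertible over a quadratic order, invertible over semilocal $\Rightarrow$ free) is exactly the right repair. The step you flag as the crux, that primitivity forces the multiplier ring of $T_\ell E$ to equal $O_\ell$, holds because a strictly larger local order globalizes (conductors of quadratic orders are read off prime by prime) to a superorder $O'\supsetneq O$ with $\iota(O')\subseteq\End E$, contradicting $\iota^{-1}(\End E)=O$. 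Since Lenstra's statement for $\alpha=\ell^n$ is, via inverse limits, equivalent to your freeness of $T_\ell E$, both proofs rest on the same underlying fact; the paper's is hands-on and cite-and-compute, yours is structural and would transport unchanged to other Bass orders.
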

\begin{proof}
  If $O$ is a maximal order, then for $k \subseteq \C$ this is precisely~\cite[Proposition II.1.4(b)]{SilvermanII}, while for $k$ finite one can use the Deuring lifting theorem to reduce to the case $k \subseteq \C$. 
  To obtain the statement for arbitrary imaginary quadratic orders and over arbitrary base fields, 
  we start by writing $\mfrak = N O + \alpha O$ with
  $N$ the positive generator of $\mfrak \cap \Z$. We can assume that the norm of $\alpha$ is coprime with $p$. Indeed, this follows from the fact that $N(\mfrak)$
  equals the greatest common divisor of the norms of the elements of $\mfrak$ (because it is a proper ideal).

  
  
  We rely on a result due to Lenstra~\cite[Lemma~2.1]{lenstra_CM}, stating that the lemma is true for principal ideals. Applying this to $\alpha O$, we obtain an isomorphism
  \[ \phi : \frac{O}{\alpha O} \longrightarrow E[\alpha] \]
  of $O$-modules. From this it readily follows that $O/\mfrak \cong E[\mfrak]$ as groups, because for any finite abelian group $A$ we have $A/NA \cong A[N]$. However, our goal is to establish this isomorphism at the level of $O$-modules. 

Proving this amounts to showing that for any $\delta$ such that $O = \Z[\delta]$ we can find a point $P \in E[\mfrak]$ such that
\[
E[\mfrak] = \langle P, \delta(P) \rangle.
\]
Indeed, this gives a well-defined map of $O$-modules given by
\[ O/\mfrak \to E[\mfrak] : 1 \mapsto P, \]
which is clearly a surjective group homomorphism, hence bijective because we already know an isomorphism as groups.

We again use the fact that $\mfrak$ is proper, which implies that $\bar{\mfrak} \mfrak = N(\mfrak)O$. 
We start with the curve $E_{\mfrak} := E/E[\mfrak]$. Note that $E_{\mfrak} \in \Ell_k(O)$ since the corresponding isogeny $\varphi_{\mfrak}$ is horizontal, so by abuse of notation we can view $\delta$ as an endomorphism of $E_{\mfrak}$. 
Again by Lenstra's theorem, there exists an isomorphism of $O$-modules
\[
O/N(\mfrak)O \rightarrow E_{\mfrak}[N(\mfrak)].
\]
Set $P \in E_{\mfrak}[N(\mfrak)]$ to be the image of $1$ under this isomorphism. This implies that $E_{\mfrak}[N(\mfrak)] = \langle P, \delta(P) \rangle.$
Then
\[
E[\mfrak] = \langle \varphi_{\bar{\mfrak}}(P), \varphi_{\bar{\mfrak}}(\delta(P)) \rangle = \langle \varphi_{\bar{\mfrak}}(P), \delta(\varphi_{\bar{\mfrak}}(P)) \rangle,
\]
where the first equality follows from the fact that $\varphi_{\bar{\mfrak}} = \widehat{\varphi_{\mfrak}}$, and the second follows from the fact that $\delta$ ``commutes" with horizontal isogenies. Thus we have constructed our basis in the correct form. \qed
\end{proof}

\begin{remark} 
Lemma~\ref{lem:structure-m-torsion} covers all cases of interest to this paper, but it holds more generally: e.g., from the proof, it is immediate that the statement remains true if $\mfrak$ is principal and generated by a separable endomorphism, or whenever $\mfrak$ contains a separable element and $E[\mfrak]$ is cyclic. However, not all conditions can be dropped: one clearly runs into issues as soon as $\varphi_{\mfrak}$ is inseparable, while a more subtle counterexample is $O = \Z[\ell \sqrt{-1}]$ and $\mfrak$ the $O$-ideal generated by $\ell^2$ and $\ell \cdot \ell \sqrt{-1}$ (where $\ell$ denotes any prime number different from $p$).
\end{remark}


Applying Lemma~\ref{lem:structure-m-torsion} to $\mfrak = NO$ for some integer $N$ coprime to $\charac k$, we recover the well-known fact that
\[ E[N] \cong O / NO \cong \Z / N\Z \times \Z / N\Z \]
as groups, where upon writing $O = \Z[\sigma]$ for some generator $\sigma$, an instance of the last isomorphism is given by $1 \mapsto (1, 0)$, $\sigma \mapsto (0, 1)$.
This
motivates the following generalization of Definition \ref{def:level-structure-classic}.

\begin{definition}\label{def:level-structure-general}
   Let $\mfrak \subseteq O$ be a proper ideal coprime to $\charac k$. Let $\Gamma \subseteq \GL(O/\mfrak)$ be a subgroup and let $E$ be an elliptic curve primitively oriented by $O$. A $\Gamma$-level structure on $E$ is then a group isomorphism
   \[
   \Phi : O/\mfrak \rightarrow E[\mfrak]
   \]
   defined up to pre-composition with an element $\gamma \in \Gamma$ and post-composition with a $K$-oriented automorphism. We denote by $Y_\Gamma$ the set of primitively $O$-oriented elliptic curves equipped with a $\Gamma$-level structure, up to $K$-oriented isomorphisms. If $\Gamma$ consists of $O$-module automorphisms, then we denote by $Z_\Gamma \subseteq Y_\Gamma$ the subset for which the level structure is  an isomorphism of $O$-modules. 
\end{definition}

\noindent   The reason for highlighting the subset $Z_\Gamma$ will become apparent in Section~\ref{ssec:family}.


\begin{remark} \label{rmk:ignoreunits}
  Considering $\Gamma$-level structures up to $K$-oriented automorphisms amounts to identifying $(E, \Phi)$ and $(E, \iota(u) \circ \Phi)$ for every $u \in O^\times$; here $\iota$ denotes the implicit embedding of $O$ in $\End(E)$. However, in most cases this can be ignored because it is already taken care of by the $\Gamma$-level structure.  E.g., this is true if $O^\times = \{ \pm 1\}$ and
  $\Gamma$ is closed under negation.
\end{remark}

More concretely, in view of the lemma below, defining a $\Gamma$-level structure amounts to specifying a point $P$ of order $a_{\mfrak}$ and a point $Q$ of order $b_{\mfrak}$ such that $\frac{a_{\mfrak}}{b_{\mfrak}}P, Q$ is a basis of $E[b_{\mfrak}]$, considered up to ``base changes" as specified by the subgroup $\Gamma$.

\begin{lemma} \label{lem:m-torsion}Let $\mfrak \subseteq O$ be a proper ideal. Then there exist unique $a_{\mfrak}, b_{\mfrak} \in \Z$ such that $\charac k \nmid b_{\mfrak} \mid a_{\mfrak}$ and
\[E[\mfrak] \cong \frac{\Z}{(a_{\mfrak})} \times \frac{\Z}{(b_{\mfrak})} \]
for all $E \in \Ell_k(O)$.
\end{lemma}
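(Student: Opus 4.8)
The plan is to split the statement into three parts: uniqueness of the pair $(a_\mfrak,b_\mfrak)$, independence of the isomorphism class of the group $E[\mfrak]$ from the choice of $E\in\Ell_k(O)$, and the divisibility $\charac k\nmid b_\mfrak$. Uniqueness is free: every finite abelian group has a unique decomposition into invariant factors $\Z/(a)\times\Z/(b)$ with $1\le b\mid a$, so $(a_\mfrak,b_\mfrak)$ is forced by the isomorphism class of $E[\mfrak]$ once that class is known to be the same for all $E$. So the content is the independence and the control on $b_\mfrak$, and I would obtain both by exploiting that $\Ell_k(O)$ is a single $\cl_O$-orbit (in the inert case after the convention fixed at the start of Section~\ref{sec:gen_CM_action}), rather than through a case analysis on how $p$ splits in $K$.

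For independence, fix $E_0\in\Ell_k(O)$. Given any $E\in\Ell_k(O)$, transitivity of the $\cl_O$-action lets me write $E\cong\mathfrak{b}\ast E_0$ for a proper integral ideal $\mathfrak{b}\subseteq O$, and by replacing $\mathfrak{b}$ within its class I may assume $N(\mathfrak{b})$ is coprime to $p\,N(\mfrak)$ (so in particular $\#E_0[\mathfrak{b}]=N(\mathfrak{b})$ by Lemma~\ref{lem:structure-m-torsion}). The associated isogeny $\psi:=\varphi_{\mathfrak{b}}\colon E_0\to E$ is then separable of degree $N(\mathfrak{b})$ and is $K$-oriented, hence $\psi\circ\iota_0(\alpha)=\iota_E(\alpha)\circ\psi$ for all $\alpha\in O$; in particular $\psi(E_0[\mfrak])\subseteq E[\mfrak]$. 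Since $\mfrak$ is proper we have $N(\mfrak)\in\mfrak\bar\mfrak=N(\mfrak)O\subseteq\mfrak$, so $E_0[\mfrak]\subseteq E_0[N(\mfrak)]$, whose order is coprime to $\#\ker\psi=N(\mathfrak{b})$; thus $\psi$ is injective on $E_0[\mfrak]$. Running the same argument with the (also $K$-oriented) dual $\widehat{\psi}\colon E\to E_0$, and using that $\widehat{\psi}\circ\psi=[N(\mathfrak{b})]$ is invertible on the $N(\mfrak)$-torsion, I conclude that $\psi$ restricts to an isomorphism $E_0[\mfrak]\xrightarrow{\ \sim\ }E[\mfrak]$ of $O$-modules. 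Hence $E[\mfrak]$ is, up to isomorphism, independent of $E$.

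It remains to check the shape of $E[\mfrak]$ and that $\charac k\nmid b_\mfrak$. Here I would use $E[\mfrak]\subseteq E[N(\mfrak)]$ together with the following standard facts over $\overline{\F_p}$: writing $N(\mfrak)=p^j M$ with $p\nmid M$, one has $E[M]\cong(\Z/M)^2$, while $E(\overline{\F_p})[p^\infty]$ is cyclic (trivial if $E$ is supersingular, isomorphic to $\Q_p/\Z_p$ if $E$ is ordinary); hence $E[N(\mfrak)]$, and a fortiori its subgroup $E[\mfrak]$, is generated by at most two elements. This yields $E[\mfrak]\cong\Z/(a_\mfrak)\times\Z/(b_\mfrak)$ with $b_\mfrak\mid a_\mfrak$, and since the $p$-primary part of $E[\mfrak]$ is a subgroup of the cyclic group $E(\overline{\F_p})[p^\infty]$ it is itself cyclic, which forces $p\nmid b_\mfrak$. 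When $p\nmid N(\mfrak)$ one could instead invoke Lemma~\ref{lem:structure-m-torsion} directly to identify $a_\mfrak,b_\mfrak$ with the invariant factors of $O/\mfrak$, but the orbit argument has the advantage that it does not require the ``prime-to-$p$ part'' of $\mfrak$ to itself be a proper ideal.

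The main obstacle I anticipate is exactly the case $p\mid N(\mfrak)$: there $E[\mfrak]$ need not be isomorphic to $O/\mfrak$, and for ordinary $E$ the $p$-part of the torsion is non-trivial, so a naive factorisation of $\mfrak$ into a $p$-primary part and a prime-to-$p$ part would force me to track which prime of $O$ above $p$ carries the étale $p$-torsion and to check properness of the prime-to-$p$ cofactor in a possibly non-maximal order. Routing everything through a single separable $K$-oriented isogeny $\psi$, as above, circumvents this entirely and makes the equality of $(a_\mfrak,b_\mfrak)$ across the orbit manifest; the only genuinely delicate bookkeeping is the choice of $\mathfrak{b}$ coprime to $p\,N(\mfrak)$ and the use of $N(\mfrak)\in\mfrak$, which are routine.
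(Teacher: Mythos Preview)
Your argument is correct and follows essentially the same route as the paper: the paper's proof is a two-sentence sketch (``every finite subgroup of $E$ admits such a decomposition, and the independence of $E$ follows because any two such curves are connected by a horizontal isogeny of norm coprime with $\mfrak$''), and you have supplied exactly the details behind those two clauses, including the explicit check that $p\nmid b_\mfrak$ via the cyclicity of $E[p^\infty]$ and the care in choosing $\mathfrak{b}$ of norm prime to $p\,N(\mfrak)$.
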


\begin{proof}
    This is standard: every finite subgroup of $E$ admits such a decomposition, and the independence of $E$ follows because any two such curves are connected by a horizontal isogeny of norm coprime with $\mfrak$. \qed
\end{proof}

In order to motivate the next sections, let us conclude by restating (a slightly extended version of) the observation made by Galbraith, Perrin, and Voloch \cite{GPV} in the context of CSIDH.
Here one considers $\mfrak = NO$ and $\Gamma = \{ \id \}$; for simplicity we will just write $Y_N, Z_N$ instead of $Y_{\{\id\}}, Z_{\{\id\}}$. Putting an $\{\id\}$-level structure on a curve $E \in \Ell_k(O)$ just amounts to choosing a basis $P, Q \in E[N]$, i.e., a full level-$N$ structure. Writing $O = \Z[\sigma]$, elements of $Z_N \subseteq Y_N$ correspond to bases of the form $P, \sigma(P)$; it is a consequence of Lemma~\ref{lem:structure-m-torsion} that such bases indeed exist. 

\begin{theorem} \label{thm:GPV}
Let $N$ be a positive integer coprime to $\charac k$. Then the ray class group
  \[ \cl_{O,1}(NO) = I_O(NO) / P_{O, 1}(NO) \]
  acts freely on both $Y_N$ and $Z_N$; in the latter case, the action is also transitive.
\end{theorem}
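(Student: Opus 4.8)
The plan is to first establish that $\cl_{O,1}(NO)$ acts on $Y_N$ and $Z_N$ at all, then prove freeness on $Y_N$ (which gives it on $Z_N$ for free), and finally prove transitivity on $Z_N$. For the action itself: given an ideal $\afrak \in I_O(NO)$ coprime with $N$, the horizontal isogeny $\varphi_\afrak : E \to E/E[\afrak]$ is separable of degree $N(\afrak)$ coprime with $N$, hence restricts to an isomorphism $E[N] \to (E/E[\afrak])[N]$. Thus if $\Phi : O/NO \to E[N]$ is an $\{\id\}$-level structure, we set $\afrak \ast (E,\Phi) = (E/E[\afrak], \varphi_\afrak \circ \Phi)$, and if $\Phi$ was an $O$-module isomorphism it stays one because $\varphi_\afrak$ is $K$-oriented and hence $O$-linear; so the action preserves $Z_N \subseteq Y_N$. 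The first thing I would check carefully is well-definedness up to the quotient by $P_{O,1}(NO)$: I must show that if $\afrak = \alpha O$ with $\alpha \equiv 1 \bmod NO$, then $\alpha$, viewed via $\iota$ as an endomorphism of $E$, restricts to the identity on $E[N]$. This is the crux of why we quotient by rays rather than all of $P_O(NO)$, and it follows from the fact that under the $O$-module isomorphism $E[N] \cong O/NO$ of Lemma~\ref{lem:structure-m-torsion}, multiplication by $\alpha$ acts as multiplication by $\alpha \bmod NO = 1$.

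For freeness on $Y_N$: suppose $\afrak \ast (E,\Phi) = (E,\Phi)$ in $Y_N$. Forgetting the level structure, $\afrak \ast E \cong E$ in $\Ell_k(O)$, so by the freeness of the $\cl_O$-action (Onuki's Theorem~\ref{onuki} together with the subsequent corollary, using our convention that $\Ell_k(O)$ is a single orbit in the inert case) we get $\afrak \in P_O(NO)$, say $\afrak = \alpha O$ with $\alpha$ coprime to $N$. Now the hypothesis says $\varphi_\afrak \circ \Phi$ and $\Phi$ agree up to a $K$-oriented automorphism of $E/E[\afrak] \cong E$; unwinding, the composite automorphism $\beta$ of $E$ (of the form $\iota(u)$ for a unit $u$, composed with the isomorphism identifying $\afrak \ast E$ with $E$) must act on $E[N]$ in a way that, combined with multiplication by $\alpha$, is the identity — i.e. $u\alpha \equiv 1 \bmod NO$ after identifying the isomorphism $\afrak\ast E \cong E$ with multiplication by some element of $K^\ast$. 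Chasing this through, $\afrak$ differs from a ray by a principal ideal generated by a unit, and since we may absorb the unit (Remark~\ref{rmk:ignoreunits}, $O^\times = \{\pm1\}$ is handled because $\{\id\}$ is trivially closed under negation only after noting $-1$ acts as $-\id$ on $E[N]$, which is accounted for by the $K$-oriented-automorphism quotient), we conclude $\afrak \in P_{O,1}(NO)$.

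For transitivity on $Z_N$: given $(E,\Phi), (E',\Phi') \in Z_N$, first use transitivity of $\cl_O$ on $\Ell_k(O)$ to find $\afrak$ with $\afrak \ast E \cong E'$; replacing $\afrak$ within its class we may take it coprime to $N$. Now $\afrak\ast(E,\Phi) = (E', \Psi)$ for some $O$-module level structure $\Psi$, and I must correct it to $\Phi'$. Both $\Psi$ and $\Phi'$ are $O$-module isomorphisms $O/NO \to E'[N]$, so $\Phi' = \Psi \circ \mu$ for an $O$-module automorphism $\mu$ of $O/NO$, i.e. multiplication by some $\lambda \in (O/NO)^\times$. It suffices to realize $\lambda$ as the reduction of a principal ideal: choose $\gamma \in O$ coprime to $N$ with $\gamma \equiv \lambda \bmod NO$, then $\gamma O \in P_O(NO) \subseteq I_O(NO)$ and $(\gamma O) \ast (E',\Phi')$ hits $(E', \Phi' \cdot \gamma^{-1}) $ — arranging signs so that composing with $\afrak$ lands on $(E',\Phi')$. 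The main obstacle is the bookkeeping around the $K$-oriented-automorphism quotient in the freeness argument: one has to be careful that the isomorphism $\afrak \ast E \xrightarrow{\sim} E$ is not canonical but defined only up to $O^\times$, and that the $\{\id\}$-level structures are compared through exactly this ambiguity, so the conclusion is $\afrak \in P_{O,1}(NO)$ on the nose rather than merely modulo units — here the observation that for $u = -1$ the automorphism $\iota(-1)$ induces $-\id$ on $E[N]$, which is absorbed into changing $\Phi$ by $-1 \in (O/NO)^\times$, keeps everything consistent, and in fact this is exactly why transitivity needs the $Z_N$ (module) refinement: on $Y_N$ the orbit of a fixed $E$ has size $\#\cl_{O,1}(NO)$ while $Y_N$ itself is larger by the index $[\GL_2(\Z/N) : \{\text{$O$-module automorphisms}\}]$.
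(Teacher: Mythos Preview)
Your proof is correct and follows essentially the same approach as the paper. The paper's own proof is the single sentence ``This is a special case of Theorem~\ref{thm:maxlevel} below,'' and what you have written is precisely the specialization of that theorem's proof to $\Lambda=\{1\}$ and $\mfrak=NO$: define the action via $\varphi_{\afrak}\circ\Phi$, use freeness of $\cl_O$ on $\Ell_k(O)$ to reduce to principal $\afrak=\alpha O$, then argue that $\iota(\alpha)$ acting trivially on $E[N]$ (up to a unit) forces $\alpha O\in P_{O,1}(NO)$; for transitivity on $Z_N$ correct the $O$-module level structure by a principal ideal lifting the discrepancy in $(O/NO)^\times$.

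One small remark on presentation: your parenthetical invoking Remark~\ref{rmk:ignoreunits} is slightly garbled. The group $\Gamma=\{\id\}$ is \emph{not} closed under negation, so Remark~\ref{rmk:ignoreunits} does not apply to let you ignore the $K$-oriented automorphism quotient; rather, you must (and do) use that quotient explicitly. Concretely, the identification $\afrak\ast E\cong E$ together with a $K$-oriented automorphism gives $\psi\circ\varphi_{\afrak}=\iota(v\alpha)$ for some unit $v$, and the level-structure equality reads $\iota(v\alpha)\circ\Phi=\iota(u)\circ\Phi$ for some $u\in O^\times$, whence $v\alpha\equiv u\bmod NO$ and $\afrak=(u^{-1}v\alpha)O\in P_{O,1}(NO)$. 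This is what you arrive at, but the appeal to Remark~\ref{rmk:ignoreunits} should be dropped.
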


\begin{proof} This is a special case of Theorem~\ref{thm:maxlevel} below. \qed
\end{proof}

\subsection{A family of congruence subgroups} \label{ssec:family}

The goal of this section (and of this paper) is to embed Theorem~\ref{thm:GPV} in a more general story. 
We concentrate on congruence subgroups of the form 
\[ P_{O, \Lambda}(\mfrak) = \{ \, \alpha O \, | \, \alpha \in K^\times \text{ and } \alpha \equiv \lambda \bmod \mfrak \text{ for some $\lambda \in \Lambda$ coprime to $N(\mfrak)$} \, \},\]
where $\Lambda$ is a multiplicatively closed subset of $O$. This covers the aforementioned congruence subgroups $P_{O, \Z}(fO)$ and $P_{O, 1}(\mfrak) = P_{O, \{ 1\}}(\mfrak)$ as special cases, yet it also introduces several interesting new examples. 
\begin{remark} \label{rmk:includeunitsornot}
Notice that $\Lambda$ and $\pm \Lambda$ or more generally $O^\times \Lambda$ 
define the same congruence subgroup, as one can always change the generator $\alpha$ of a principal ideal accordingly.
Thus it would make sense to impose $O^\times \subseteq \Lambda$. However, we refrain from doing this, in order to keep covering standard notation such as $P_{O, \Z}(fO)$; also the exact sequence from Proposition~\ref{prop:exact-sequence-general} is affected by this, see Remark~\ref{rmk:sequence_changes}.
\end{remark}

Now, to such a congruence subgroup $P_{O, \Lambda}(\mfrak)$ we can naturally associate the subgroup 
\[
\Gamma_{O, \Lambda}(\mfrak) = \{
\, \mu_\alpha \,
\mid \alpha O \in P_{O, \Lambda}(\mfrak) \,
\} = \{ \, \mu_\lambda \, | \, \lambda \in O^\times \Lambda \, \} \subseteq \GL(O/\mfrak)
\]
where $\mu_\alpha$
refers to the action of multiplication by $\alpha$ on ${O/\mfrak}$. By definition of $P_{O, \Lambda}(\mfrak)$, this is a multiplicative subset of the finite group $\GL(O/\mfrak)$, hence indeed a subgroup.
Note that the $\mu_\alpha$'s are $O$-module automorphisms, so both  $Y_{\Gamma_{O, \Lambda}(\mfrak)}$ and
$Z_{\Gamma_{O, \Lambda}(\mfrak)}$
are well-defined.

\begin{theorem} \label{thm:maxlevel}
  Let $\mfrak \subseteq O$ be a proper ideal, and let $H = P_{O, \Lambda}(\mfrak)$ be as above. Then 
 \begin{equation} \label{eq:our_action} 
    [\afrak] \star (E, \Phi) = (\varphi_{\mathfrak{a}}(E), \varphi_{\mathfrak{a}} \circ \Phi)
\end{equation}  
  is a well-defined free action
 of $\cl_H$ on $Z_{\Gamma_{O, \Lambda}(\mfrak)}$. Moreover, this action is transitive. If $\Lambda \subseteq O^\times \Z$ then this extends to a free action of $\cl_H$ on $Y_{\Gamma_{O, \Lambda}(\mfrak)}$.
\end{theorem}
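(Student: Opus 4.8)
The plan is to realise the formula \eqref{eq:our_action} as the descent of a single group homomorphism $\Psi$ from $I_O(\mfrak)$ to the group of permutations of $Z_\Gamma$ (write $\Gamma:=\Gamma_{O,\Lambda}(\mfrak)$), and then read off freeness and transitivity from the free transitive action of $\cl_O$ on $\Ell_k(O)$ (Theorem~\ref{onuki} and the corollary after it). Throughout one works with integral ideals $\afrak$ coprime to $p\mfrak$; every class of $\cl_H$ has such a representative. First I would set up the dictionary that governs all the computations. By Lemma~\ref{lem:structure-m-torsion}, $E[\mfrak]\cong O/\mfrak$ as $O$-modules for every $E\in\Ell_k(O)$, so the $O$-module isomorphisms $O/\mfrak\to E[\mfrak]$ form a torsor under $\mathrm{Aut}_O(O/\mfrak)=\{\mu_\tau\mid\tau\in(O/\mfrak)^\times\}$; $K$-oriented automorphisms of $E$ act on $E[\mfrak]$ through $\{\iota(u)\mid u\in O^\times\}$, and $\Gamma$ contains every $\mu_u$ with $u\in O^\times$. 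Since $\varphi_\afrak$ is $K$-oriented of degree $N(\afrak)$ coprime to $N(\mfrak)$, it restricts to an $O$-module isomorphism $E[\mfrak]\xrightarrow{\sim}\varphi_\afrak(E)[\mfrak]$; and for $\beta\in O$ with $\beta O$ coprime to $p\mfrak$, the isogeny $\varphi_{\beta O}$ is, up to the canonical $K$-oriented identification $E/E[\iota(\beta)]\cong E$, just $\iota(\beta)$, so that $[\beta O]\star(E,\Phi)=(E,\iota(\beta)|_{E[\mfrak]}\circ\Phi)$, which for an $O$-linear $\Phi$ equals $(E,\Phi\circ\mu_\beta)$.

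\textbf{Well-definedness.} Using these facts together with Definition~\ref{def:group_action_orientedcurves}, one checks routinely that $T_\afrak\colon(E,\Phi)\mapsto(\varphi_\afrak(E),\varphi_\afrak\circ\Phi)$ is a well-defined map $Z_\Gamma\to Z_\Gamma$: the ambiguity in $\varphi_\afrak$ only changes the output by a $K$-oriented automorphism, and changing the $\Gamma$-representative of $\Phi$ only changes $\varphi_\afrak\circ\Phi$ by an allowed pre/post-composition. Compatibility of horizontal isogenies with composition gives $T_\afrak\circ T_\bfrak=T_{\afrak\bfrak}$, and $T_\afrak\circ T_{\bar\afrak}=T_{N(\afrak)O}$ is a bijection (it is multiplication by $N(\afrak)$ on level structures, and $N(\afrak)$ is invertible modulo $\mfrak$); hence each $T_\afrak$ is bijective and $\afrak\mapsto T_\afrak$ is a monoid homomorphism, which extends uniquely to a group homomorphism $\Psi\colon I_O(\mfrak)\to\mathrm{Sym}(Z_\Gamma)$. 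It remains to prove $\Psi(H)=\{\mathrm{id}\}$. Let $\alpha O\in H$, so $\alpha\equiv\lambda\bmod\mfrak$ for some $\lambda\in\Lambda$ coprime to $N(\mfrak)$. Since $H\subseteq P_O(\mfrak)$ and $P_O(\mfrak)$ is generated by integral principal ideals coprime to $\mfrak$ (\cite[\S4.3]{kopplagarias2022}), write $\alpha O=\prod_i(c_iO)^{\varepsilon_i}$ with $c_i\in O$ and $c_iO$ coprime to $p\mfrak$; then $\prod_i c_i^{\varepsilon_i}=u\alpha$ for some $u\in O^\times$, and by the dictionary $\Psi(\alpha O)=\prod_iT_{c_iO}^{\varepsilon_i}$ is the map $(E,\Phi)\mapsto(E,\Phi\circ\mu_{u\alpha})$ with $\mu_{u\alpha}=\mu_u\mu_\lambda\in\Gamma$, hence the identity on $Z_\Gamma$. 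So $\Psi$ descends to a group action of $\cl_H$ on $Z_\Gamma$. The only genuinely delicate point here is the reliance on a workable description of $P_O(\mfrak)$, i.e.\ of $I_O(\mfrak)$ for non-maximal $O$ and arbitrary $\mfrak$; this is the main obstacle, and is where I would lean on \cite{kopplagarias2022}.

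\textbf{Freeness and transitivity.} If $[\afrak]\star(E,\Phi)=(E,\Phi)$, then $\varphi_\afrak(E)\cong E$ in $\Ell_k(O)$, so $\afrak=\beta O$ is principal by the freeness part of Theorem~\ref{onuki}; the dictionary turns the fixed-point equation into $(E,\Phi\circ\mu_\beta)=(E,\Phi)$ in $Z_\Gamma$, forcing $\mu_\beta=\mu_v\circ\gamma$ for some $v\in O^\times$, $\gamma\in\Gamma$, i.e.\ $\beta\equiv v\lambda\bmod\mfrak$ with $v\lambda\in O^\times\Lambda$ coprime to $N(\mfrak)$; hence $\beta O\in P_{O,\Lambda}(\mfrak)=H$ and $[\afrak]=1$ in $\cl_H$. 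For transitivity, given $(E,\Phi),(E',\Phi')\in Z_\Gamma$, pick (by transitivity of $\cl_O$) an integral $\afrak_0$ coprime to $p\mfrak$ with a $K$-oriented isomorphism $\eta\colon\varphi_{\afrak_0}(E)\xrightarrow{\sim}E'$; then $\eta\circ\varphi_{\afrak_0}\circ\Phi$ and $\Phi'$ are both $O$-module isomorphisms $O/\mfrak\to E'[\mfrak]$, so $\eta\circ\varphi_{\afrak_0}\circ\Phi=\Phi'\circ\mu_\tau$ for a unique $\tau\in(O/\mfrak)^\times$; lifting $\tau^{-1}$ to $\gamma\in O$ (which one may arrange to be coprime to $p\mfrak$ by the Chinese remainder theorem), the ideal $\afrak:=(\gamma O)\,\afrak_0$ satisfies $[\afrak]\star(E,\Phi)=[\gamma O]\star(E',\Phi'\circ\mu_\tau)=(E',\Phi'\circ\mu_{\gamma\tau})=(E',\Phi')$.

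\textbf{The extension to $Y_\Gamma$ when $\Lambda\subseteq O^\times\Z$.} Here $\Phi$ is only a group isomorphism, so the identity $\iota(\beta)|_{E[\mfrak]}\circ\Phi=\Phi\circ\mu_\beta$ is unavailable, but the hypothesis on $\Lambda$ compensates: every $\gamma\in\Gamma$ now has the form $\mu_w\circ(n\cdot\mathrm{id})$ with $w\in O^\times$ and $n\in\Z$ coprime to $N(\mfrak)$, where the scalar $n\cdot\mathrm{id}$ is central in $\GL(O/\mfrak)$ and is preserved by any group isomorphism, while $\mu_w$ can be absorbed into a $K$-oriented automorphism. Running the arguments above, in the well-definedness and freeness steps one reaches an equation expressing some $\iota(\delta)|_{E[\mfrak]}$ as $\Phi$ composed (on either side) with a $\Gamma$-element and a $K$-oriented automorphism; choosing the automorphism to cancel the unit part lets one take $\delta\in\Z$, making $\iota(\delta)|_{E[\mfrak]}$ a scalar, so that the conjugation by $\Phi$ is harmless and one concludes as before. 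The point requiring extra attention is the case $O^\times\neq\{\pm1\}$ (i.e.\ $O=\Z[\sqrt{-1}]$ or $\Z[\zeta_3]$), where $\mu_w$ is not central: there one additionally uses that an $O$-linear automorphism of $O/\mfrak$ which is $\GL(O/\mfrak)$-conjugate to multiplication by a unit of $O$ is itself, modulo $\mfrak$, multiplication by an element of $O^\times\cdot\Z$, which follows from the scarcity of ring automorphisms of $O/\mfrak$ (cf.\ Remark~\ref{rmk:ignoreunits}). This gives a well-defined free action of $\cl_H$ on $Y_\Gamma$; transitivity genuinely uses that the level structures are $O$-module maps and so is not claimed for $Y_\Gamma$.
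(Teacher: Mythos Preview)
Your argument for $Z_\Gamma$ is correct and follows the paper's approach: both hinge on the identity $\varphi_{\alpha O}\circ\Phi = \Phi\circ\mu_\alpha$ for $O$-linear $\Phi$ (your ``dictionary'', the paper's equation~\eqref{eq:compatibility}), combined with freeness and transitivity of $\cl_O$ on $\Ell_k(O)$. Your extra scaffolding---the monoid homomorphism, bijectivity via $T_{\bar\afrak}$, and the factorisation of $\alpha O$ into integral principal ideals from~\cite{kopplagarias2022}---is sound but not needed; the paper applies the identity directly. For the $Y_\Gamma$ extension when $O^\times=\{\pm1\}$, your argument again matches the paper's: both use that $\mu_n$ with $n\in\Z$ is scalar, so $\Phi\circ\mu_n=[n]\circ\Phi$ even for a mere group isomorphism $\Phi$, and then that $\iota(\alpha)$ and $[\lambda]$ agreeing on $E[\mfrak]$ forces $\alpha\equiv\lambda\bmod\mfrak$.

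Your handling of $O^\times\neq\{\pm1\}$, however, has a genuine gap. You claim that if $\mu_\beta$ is $\GL(O/\mfrak)$-conjugate to some $\mu_w$ with $w\in O^\times$, then $\beta\in O^\times\Z+\mfrak$. Even granting this, it does not give $\beta O\in P_{O,\Lambda}(\mfrak)$, which requires $\beta\in O^\times\Lambda+\mfrak$; for $\Lambda\subsetneq\Z$ (up to units) this is strictly stronger. Concretely, take $O=\Z[i]$, $\mfrak=15O$, $\Lambda=\{1\}$, so $\Gamma=\{\mu_u:u\in O^\times\}$. Modulo $3$ one has $4i\equiv i$; modulo $5$ one has $4i\equiv -i$, and $\mu_{-i}$ is conjugate to $\mu_i$ in $\GL_2(\F_5)$ via $\mathrm{diag}(1,-1)$. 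Hence $\mu_{4i}=M\mu_iM^{-1}$ for some $M\in\GL(O/15O)$, and setting $\Phi:=\Psi\circ M^{-1}$ with $\Psi$ any $O$-module isomorphism $O/15O\to E[15]$ gives $\iota(4i)\circ\Phi=\Phi\circ\mu_i$, so the ray class $[4iO]$ fixes $(E,\Phi)\in Y_\Gamma$. But $4i\not\equiv u\bmod 15$ for any $u\in O^\times$, so $[4iO]$ is nontrivial in the ray class group and freeness fails on this $(E,\Phi)$. The paper does not actually close this case either---its appeal to Remark~\ref{rmk:ignoreunits} only covers $O^\times=\{\pm1\}$---so your extra claim is attempting more than the paper proves, and the attempt does not succeed.
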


\begin{proof}
Since $\deg \varphi_{\afrak} = N(\afrak)$ is assumed coprime with $\mfrak$, it follows readily that the right-hand side of~\eqref{eq:our_action} is an element of $Y_{\Gamma_{O, \Lambda}(\mfrak)}$. Using that $\varphi_{\afrak}$ is $K$-oriented, we also see that it concerns an element of 
$Z_{\Gamma_{O, \Lambda}(\mfrak)}$ as soon as $(E, \Phi)$ is.

Now assume $(E, \Phi) \in Z_{\Gamma_{K, \Lambda}(\mfrak)}$ and
let $\afrak = \alpha O$ be the principal ideal generated by some $\alpha \in O$. Then 
\begin{equation} \label{eq:compatibility} \varphi_{\afrak} \circ \Phi = \Phi \circ \mu_\alpha 
\end{equation}
because $\Phi$ is an isomorphism of $O$-modules. It follows that $\Phi$ and $ \varphi_{\afrak} \circ \Phi$ define the same $\Gamma_{O, \Lambda}(\mfrak)$-level structure on $E$ if and only if $\alpha O \in P_{O, \Lambda}(\mfrak)$. But this implies that the action is well-defined and free. As for the transitivity, it suffices to argue that if
\[ \Phi_1, \Phi_2 : O/\mfrak \to E[\mfrak] \]
are two isomorphisms as $O$-modules, then there exists $\alpha \in O$ such that $\Phi_2 = \varphi_{\alpha O} \circ \Phi_1$. This is evident from the fact that we are dealing with free rank-$1$ modules over $O/\mfrak$.

Finally, we need to show that if $\Lambda \subseteq O^\times \Z$, then we still have a well-defined and free action on all of $Y_{\Gamma_{O, \Lambda}(\mfrak)}$. By ignoring post-compositions with $K$-oriented automorphisms, we can in fact assume $\Lambda \subseteq \Z$. For this we need to show that
\[ \varphi_{\alpha O} \circ \Phi = \Phi \circ \mu_{\alpha'} \]
for some $\alpha'O \in P_{O, \Lambda}(\mfrak)$ if and only if $\alpha O \in P_{O, \Lambda}(\mfrak)$. Since we are working modulo $\mfrak$, this amounts to saying that
\[ \varphi_{\alpha O} \circ \Phi = \Phi \circ \mu_{\lambda} = [\lambda] \circ \Phi \]
for some $\lambda \in \Lambda$ if and only if $\alpha O \in P_{O, \Lambda}(\mfrak)$; the last equality follows because $\Phi$ is a group homomorphism. If $\alpha O \in P_{O, \Lambda}(\mfrak)$ then the existence of such a $\lambda$ is clear. On the other hand, if such a $\lambda$ exists then from~\cite[Cor.\,III.4.11]{Silverman} it follows that $\alpha \equiv \lambda \bmod \mfrak$, as wanted. (Note that, in the above reasoning, we have used that we can ignore units, in view of Remark~\ref{rmk:ignoreunits}.) \qed
\end{proof}

In general, the action of the generalized class group $\cl_H$ on $Y_{\Gamma_{O, \Lambda}(\mfrak)}$ is far from transitive. E.g., recall from Theorem~\ref{thm:GPV} that 
the ray class group acts freely on
\[ Y_{\Gamma_N} = \{ \, (E, P, Q) \, | \, E \in \Ell_k(O), \ \text{$P, Q$ basis of $E[N]$} \, \}, \]
but when writing $O = \Z[\sigma]$, it is easy to see that if $P$ happens to be an eigenvector of $\sigma$, this can never be ``undone" by acting with a ray class. There are two natural ways to make the action more transitive:
\begin{itemize}
    \item Restricting to a subset of $Y_{\Gamma_{O, \Lambda}(\mfrak)}$; this is exactly what we did above when studying $Z_{\Gamma_{O, \Lambda}}$, which seems to be the most natural option.
    \item Further identifying elements of $Y_{\Gamma_{O, \Lambda}}$ by working with a bigger group $\Gamma \supseteq \Gamma_{O, \Lambda}$.
\end{itemize}

We now  analyse the action of $\cl_H$ on a set defined by $\Gamma \supseteq \Gamma_{O, \Lambda}(\mfrak)$. First, note that we are free to chose any such set, as the following lemma shows.

\begin{lemma}\label{lem:supergroupaction}
     Assume $\Lambda \subseteq \Z$, let $H = P_{O, \Lambda}(\mfrak)$ and consider the free action of $\cl_H$ on $Y_{\Gamma_{O, \Lambda}}$ from above. Then this descends to a well-defined action
 of $\cl_H$ on $Y_{\Gamma}$ for any $\Gamma \supseteq \Gamma_{O, \Lambda}(\mfrak)$.
\end{lemma}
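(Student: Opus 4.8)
The plan is to show that the free action of $\cl_H$ on $Y_{\Gamma_{O,\Lambda}(\mfrak)}$, which exists by Theorem~\ref{thm:maxlevel} under the hypothesis $\Lambda \subseteq \Z$ (recall $\Z \subseteq O^\times\Z$), factors through the quotient map $Y_{\Gamma_{O,\Lambda}(\mfrak)} \twoheadrightarrow Y_\Gamma$ whenever $\Gamma \supseteq \Gamma_{O,\Lambda}(\mfrak)$. First I would make precise the relationship between the two sets: an element of $Y_{\Gamma_{O,\Lambda}(\mfrak)}$ is an isomorphism class of pairs $(E,\Phi)$ with $\Phi : O/\mfrak \to E[\mfrak]$ taken up to pre-composition with $\Gamma_{O,\Lambda}(\mfrak)$ and post-composition with $K$-oriented automorphisms, while $Y_\Gamma$ is the analogous set with the larger equivalence (pre-composition with $\Gamma$). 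Since $\Gamma_{O,\Lambda}(\mfrak) \subseteq \Gamma$, there is a natural surjection $q : Y_{\Gamma_{O,\Lambda}(\mfrak)} \to Y_\Gamma$ sending the class of $(E,\Phi)$ to its class in $Y_\Gamma$; two elements of $Y_{\Gamma_{O,\Lambda}(\mfrak)}$ have the same image under $q$ precisely when their representatives $(E,\Phi_1)$, $(E,\Phi_2)$ on a common curve satisfy $\Phi_2 = \Phi_1 \circ \gamma$ (up to $K$-oriented automorphism) for some $\gamma \in \Gamma$.

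Next I would verify that the $\cl_H$-action is constant on the fibres of $q$. Take $[\afrak] \in \cl_H$ with $\afrak$ coprime to $\mfrak$, and two classes in $Y_{\Gamma_{O,\Lambda}(\mfrak)}$ that agree in $Y_\Gamma$; without loss of generality pick representatives $(E,\Phi_1)$ and $(E,\Phi_2)$ with $\Phi_2 = \Phi_1 \circ \gamma$ for some $\gamma \in \Gamma$ (absorbing the $K$-oriented automorphism, which only affects things by a unit and can be ignored as in Remark~\ref{rmk:ignoreunits}, or carried along harmlessly). By the defining formula~\eqref{eq:our_action}, $[\afrak] \star (E,\Phi_i) = (\varphi_\afrak(E), \varphi_\afrak \circ \Phi_i)$, and $\varphi_\afrak \circ \Phi_2 = \varphi_\afrak \circ \Phi_1 \circ \gamma$. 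Hence the two images differ only by the same pre-composition with $\gamma \in \Gamma$, so they represent the same class in $Y_\Gamma$. This shows the induced map
\[
\cl_H \times Y_\Gamma \to Y_\Gamma : ([\afrak], q(E,\Phi)) \mapsto q\big([\afrak] \star (E,\Phi)\big)
\]
is well-defined. That it satisfies the axioms of a group action (identity acts trivially, compatibility with the group law of $\cl_H$) is inherited directly from the corresponding properties on $Y_{\Gamma_{O,\Lambda}(\mfrak)}$, since $q$ is surjective and equivariant by construction.

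The only genuinely delicate point — and the step I expect to require the most care — is confirming that the relation "$q(x) = q(y)$" is exactly captured by pre-composition with an element of $\Gamma$ on a common underlying curve, i.e.\ that passing to $Y_\Gamma$ does not secretly identify curves that were non-isomorphic in $\Ell_k(O)$ or conflate the level-structure equivalence with the $K$-oriented-isomorphism equivalence in an unexpected way. This is handled by noting that the underlying curve $E \in \Ell_k(O)$ of a class is an invariant of all these sets (the equivalence relations only move $\Phi$, together with simultaneous transport along $K$-oriented isomorphisms of $E$), so comparing two classes reduces to comparing level structures on a single fixed curve, where the statement is immediate from the definitions. One should also remark that well-definedness does not require $\Gamma$ to contain $\Gamma_{O,\Lambda}(\mfrak)$ as a \emph{normal} subgroup: the action of $\cl_H$ here is always by the "same $\gamma$" bookkeeping and never conjugates $\gamma$, so no normality hypothesis is needed. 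I would close by emphasising that, in contrast to the situation on $Z_{\Gamma_{O,\Lambda}(\mfrak)}$, this larger action is generally neither free nor transitive, which is precisely why it is stated merely as a well-defined action. \qed
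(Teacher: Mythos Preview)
Your proposal is correct and follows essentially the same approach as the paper: both arguments observe that $Y_\Gamma$ is a quotient of $Y_{\Gamma_{O,\Lambda}(\mfrak)}$ and then check that if $\Phi' = \Phi \circ \gamma$ with $\gamma \in \Gamma$, then $\varphi_{\afrak}\circ\Phi' = (\varphi_{\afrak}\circ\Phi)\circ\gamma$, so the action descends. Your write-up is more detailed (the remarks on $K$-oriented automorphisms, on normality being unnecessary, and on the loss of freeness/transitivity are all accurate and welcome), but the underlying idea is identical to the paper's two-line proof.
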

\begin{proof}
    The set $Y_{\Gamma}$ consists of equivalence classes of elements of $ Y_{\Gamma_{O, \Lambda}}$. 
    Thus, we only need to show that if $(E, \Phi) \sim (E, \Phi')$, i.e. $\Phi' = \Phi \circ T$ for some $T \in \Gamma$, then $(\varphi_{\mathfrak{a}}(E), \varphi_{\mathfrak{a}} \circ \Phi) \sim (\varphi_{\mathfrak{a}}(E), \varphi_{\mathfrak{a}} \circ \Phi')$ for all $[\mathfrak{a}]\in \cl_H$. 
    But this is clearly true, since we still have $\varphi_{\mathfrak{a}} \circ \Phi' = \varphi_{\mathfrak{a}} \circ \Phi \circ T$. \qed
\end{proof}

\subsection{A generalised exact sequence}
Recall that $H \subseteq P_O(\mfrak)$. Thus, the class group $\cl_H$ surjects onto $\cl_O$, and as the action of $\cl_O$ is well understood, we aim to study the action of the kernel of this surjection. To do this, we start by slightly generalising the exact sequence from Theorem~\ref{thm:kopplagarius}:

\begin{proposition}\label{prop:exact-sequence-general} Let $\mfrak, \Lambda$ and $O$ be as above. Let $H = P_{O, \Lambda}(\mfrak)$. Then $\Lambda$ defines a subgroup of $(O/\mfrak)^\times$, defined as $\Delta := \phi(\Lambda) \cap (O/\mfrak)^\times$, where $\phi$ denotes the natural surjection from $O$ to $O/\mfrak$. Then, there is an exact sequence
    \[
    1 \rightarrow O^\times/(O^\times \cap (\Lambda + \mfrak)) \rightarrow (O/\mfrak)^\times/\Delta \rightarrow \cl_H \rightarrow \cl_O \rightarrow 1    
    \]
\end{proposition}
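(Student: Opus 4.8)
The plan is to follow (a mild generalization of) the classical route behind the ray class group exact sequence, as in the proofs of \cite[Prop.\,7.22]{cox} and \cite[Thm.\,5.4]{kopplagarias2022}. First I would dispatch the easy preliminaries. The set $\phi(\Lambda)$ is multiplicatively closed, and a multiplicatively closed subset of a \emph{finite} group is automatically a subgroup, so $\Delta = \phi(\Lambda) \cap (O/\mfrak)^\times$ is indeed a subgroup of $(O/\mfrak)^\times$. Moreover, since $H \subseteq P_O(\mfrak) \subseteq I_O(\mfrak)$ and the natural map $I_O(\mfrak)/P_O(\mfrak) \to \cl_O$ is an isomorphism (as recalled in Section~\ref{sec:background}), the quotient map realizes a surjection $\cl_H \twoheadrightarrow \cl_O$ with kernel $P_O(\mfrak)/H$. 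Hence it suffices to produce an exact sequence
\[ 1 \longrightarrow O^\times/(O^\times\cap(\Lambda+\mfrak)) \longrightarrow (O/\mfrak)^\times/\Delta \longrightarrow P_O(\mfrak)/H \longrightarrow 1 \]
and to splice it with the surjection onto $\cl_O$.

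The central object is the homomorphism $\psi : (O/\mfrak)^\times \to P_O(\mfrak)/H$ sending the class of $\alpha \in O$ (coprime with $\mfrak$) to the class of $\alpha O$. I would verify, in order: (i) well-definedness --- if $\alpha \equiv \beta \bmod \mfrak$ with $\bar\alpha,\bar\beta \in (O/\mfrak)^\times$, then picking $\gamma \in O$ with $\beta\gamma \equiv 1 \bmod \mfrak$ exhibits $\alpha\gamma O$ and $\beta\gamma O$ as rays, so $(\alpha/\beta)O = (\alpha\gamma O)(\beta\gamma O)^{-1} \in P_{O,1}(\mfrak) \subseteq H$; (ii) $\psi$ is a homomorphism, which is immediate; (iii) $\psi$ is surjective, because $P_O(\mfrak)$ is generated by principal integral ideals coprime with $\mfrak$, each of which is of the form $\alpha O$ with $\alpha \in O$ coprime with $\mfrak$, i.e. $\bar\alpha \in (O/\mfrak)^\times$.

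Next I would compute $\ker\psi = \overline{O^\times}\cdot\Delta$, where $\overline{O^\times}$ is the image of $O^\times$ in $(O/\mfrak)^\times$. The inclusion ``$\supseteq$'' holds since units map to $[O]$, while for $\lambda \in \Lambda$ with $\bar\lambda \in (O/\mfrak)^\times$ one has $\lambda O \in P_{O,\Lambda}(\mfrak) = H$ directly from the definition; the inclusion ``$\subseteq$'' holds because $\alpha O \in H$ means $\alpha O = \beta O$ with $\beta \equiv \lambda \bmod \mfrak$ for some admissible $\lambda \in \Lambda$, and then $\beta = u\alpha$ with $u \in O^\times$ (both being elements of $O$ generating the same ideal), whence $\bar\alpha = \overline{u^{-1}}\,\bar\lambda \in \overline{O^\times}\cdot\Delta$. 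In particular $\Delta \subseteq \ker\psi$, so $\psi$ descends to a surjection $\bar\psi : (O/\mfrak)^\times/\Delta \twoheadrightarrow P_O(\mfrak)/H$ with $\ker\bar\psi = (\overline{O^\times}\cdot\Delta)/\Delta \cong \overline{O^\times}/(\overline{O^\times}\cap\Delta)$. Finally, the preimage of $\overline{O^\times}\cap\Delta$ under $O^\times \to (O/\mfrak)^\times$ is precisely $\{\,u \in O^\times : \bar u \in \phi(\Lambda)\,\} = O^\times \cap (\Lambda+\mfrak)$, so this kernel is $O^\times/(O^\times\cap(\Lambda+\mfrak))$; pre-composing with $O^\times \to (O/\mfrak)^\times/\Delta$ and splicing $\bar\psi$ with the identification $P_O(\mfrak)/H = \ker(\cl_H \to \cl_O)$ and the surjection $\cl_H \twoheadrightarrow \cl_O$ gives the claimed four-term sequence. (When $\Lambda = \{1\}$ this recovers the classical ray class group sequence, and when $\mfrak = fO$, $\Lambda = \Z$ it recovers Theorem~\ref{thm:kopplagarius}.)

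The step I expect to require the most care is the identity $\ker\psi = \overline{O^\times}\cdot\Delta$, specifically the bookkeeping around the phrase ``coprime with $N(\mfrak)$'' in the definition of $P_{O,\Lambda}(\mfrak)$: one needs that, for $\lambda \in O$, this condition is equivalent to $\bar\lambda$ being a unit in $O/\mfrak$, so that the definition of $\Delta$ via $(O/\mfrak)^\times$ dovetails with $H$; this is a short argument (or, in view of the identifications in Remark~\ref{rmk:includeunitsornot}, may simply be taken as the reading of the definition). A second, minor point is that the congruence $\alpha/\beta \equiv 1 \bmod \mfrak$ in step (i) is a statement about ratios, so one should appeal to the basic properties of the congruence relation from \cite[\S4]{kopplagarias2022} rather than unwind it by hand. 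Everything else is a routine diagram chase.
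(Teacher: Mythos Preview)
Your proposal is correct and follows essentially the same route as the paper's proof, which also mimics \cite[Thm.~7.24]{cox}: both identify $\ker(\cl_H\to\cl_O)=P_O(\mfrak)/H$, build the map $(O/\mfrak)^\times\to P_O(\mfrak)/H$ via $\bar\alpha\mapsto[\alpha O]$, and compute its kernel as $\overline{O^\times}\cdot\Delta$. The only cosmetic differences are that you invoke the footnote ($P_O(\mfrak)$ is generated by integral principal ideals coprime to $\mfrak$) for surjectivity, whereas the paper writes out the norm trick $N\gamma_2^{-1}=\overline{\gamma_2}$ explicitly, and you compute $\ker\psi$ before passing to the quotient by $\Delta$, while the paper works directly in $(O/\mfrak)^\times/\Delta$; your flagged concern about ``coprime with $N(\mfrak)$'' versus ``unit in $O/\mfrak$'' is a genuine definitional wrinkle that the paper's own proof glosses over as well.
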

\begin{proof}
    The proof closely follows the proof from Cox \cite[Theorem 7.24]{cox}, which proves the special case where $\Lambda = \Z$, $\mfrak = (f)$ and $O = O_K$.
    
    We prove this from the right to left. The surjection $\pi: \cl_H \rightarrow \cl_O$ is obtained from the natural map sending $[\afrak] \in I_O(\mfrak)/P_{O, \Lambda}(\mfrak)$ to the class of $\afrak$ in $I_O(\mfrak)/P_{O}(\mfrak) \cong \cl_O$. The kernel is therefore exactly
    $P_O(\mfrak)/P_{O, \Lambda}(\mfrak)$. Next, we show that there is a surjection
    \[
    (O/\mfrak)^\times/\Delta \rightarrow P_O(\mfrak)/P_{O, \Lambda}(\mfrak)
    \]
    obtained by sending $[[\alpha]] \in (O/\mfrak)^\times/\Delta$ to $\alpha O$ (we will use the notation $[\gamma]$ for elements of $(O/\mfrak)^\times$, and $[[\gamma]]$ for elements of $(O/\mfrak)^\times/\Delta$). The ideal $\alpha O$ is clearly in $P_O(\mfrak)$. Further, let $[\alpha] = [\beta][\delta]$, for some $\delta \in \Lambda$, i.e. $\alpha$ and $\beta$ are in the same class of $(O/\mfrak)^\times/\Delta$. Then, unraveling the definitions, there exists some $u \in O$ such that $u\alpha \equiv u\beta \delta \equiv 1 \pmod{\mfrak}$. Further, we can choose some $\delta' \in \Lambda$ such that $[\delta'] \equiv [\delta]^{-1}$. Thus, we have that
    \[
    \alpha O \cdot u\beta\delta O = \beta O \cdot u\alpha \delta^{-1} O,
    \]
    which shows that the map is a well defined group homomorphism, since $u\beta\delta O \in P_{O, 1}(\mfrak) \subseteq P_{O, \Lambda}(\mfrak)$, and $u\alpha \delta^{-1} O \in P_{O, \Lambda}(\mfrak)$.

    Next, we show that the map is surjective. Let $\gamma O \in P_O(\mfrak)$. Obviously, if $\gamma \in O$, then $[[\gamma]]$ maps to $\gamma O$. In general, $\gamma$ can be written as $\gamma_1 \gamma_2^{-1}$ for $\gamma_1, \gamma_2 \in O$, which both are coprime to $\mfrak$. Let $N$ be the norm of $\gamma_2$. Since $N$ is coprime to $\mfrak$, there exists a $k \in \Z$ such that $kN \equiv 1 \mod \mfrak$. Since $N\gamma_2^{-1} = \overline{\gamma_2}$, we have that the class $[[\gamma_1][k\overline{\gamma_2}]]$ maps to $k\gamma_1\overline{\gamma_2}O = \gamma O \cdot kN O$, and since $kN \equiv 1 \mod \mfrak$, we have $kN O \in P_{O, 1}(\mfrak) \subseteq P_{O, \Lambda}(\mfrak)$, proving that the map is surjective.

    Finally, assume $[[\alpha]] \in (O/\mfrak)^\times/\Delta$ satisfies $\alpha O \in P_{O, \Lambda}(\mfrak)$. Thus, we have that $\alpha O = \beta\gamma^{-1} O$, for some $\beta, \gamma$ satisfying $[\beta'][\gamma]^{-1} \in \Delta$, and that $\alpha = \mu\beta\gamma^{-1}$ for some $\mu \in O^\times$. This in turn means that $[\alpha] = [\mu][\beta][\gamma]^{-1}$, and since $[\beta][\gamma]^{-1} \in \Delta$, we see that $[[\alpha]] = [[\mu]]$, i.e. $[[\alpha]]$ is in the image of the natural homomorphism $O^\times \rightarrow (O/\mfrak)^\times/\Delta$, whose kernel is in turn exactly $O^\times \cap (\Lambda + \mfrak)$. \qed 
\end{proof}

\begin{remark}
    We can compare Proposition \ref{prop:exact-sequence-general} with both the classical $O=O_K$ version of Theorem \ref{thm:kopplagarius} and the formula for computing the size of the ray class group from \cite[Theorem 3.2.4]{cohen2012advanced}: Specialising to $\Lambda = \Z$ and $\mfrak = (f)$, and writing $O := \Z + fO_K$, we immidiately see that $$O_K^\times \cap (\Lambda + \mfrak) = O_K^\times \cap (\Z + fO_K) = O^\times,$$ and similarily, $\Delta = \phi(\Z) \cap (O_K/fO_K)^\times = (\Z + fO_K/fO_K)^\times = (O/fO_K)^\times$, recovering the exact sequence from Theorem~\ref{thm:kopplagarius}. 
    
    The size of the ray class group can be computed from the exact sequence when $\Lambda = \{1\}$, in which case one finds that $\Delta = \{[1]\}$, and thus
    \[
    \#\cl_H = h(O_K)\frac{\#(O_K/\mfrak)^\times}{\frac{O_K^\times}{O_K^\times \cap (1 + \mfrak)}} = h(O_K)\frac{\#(O_K/\mfrak)^\times}{[O_K^\times : O_{K, \mfrak}^\times]}
    \]
    where $O_{K, \mfrak}^\times$ is the group of units congruent to $1 \bmod{\mfrak}$. This same remark applies in the relative case, $O'\subseteq O\subseteq O_K$.
\end{remark}

\begin{remark} \label{rmk:sequence_changes}
  Recall from Remark~\ref{rmk:includeunitsornot} that switching from $\Lambda$ to $O^\times \Lambda$ does not affect the congruence subgroup $P_{O, \Lambda}(\mfrak)$, and therefore it does not change the generalized class group either. Also, it does not affect the subgroup $\Gamma_{O, \Lambda}(\mfrak)$. However, it is interesting to observe that it can slightly reorganize the terms in the exact sequence from Proposition~\ref{prop:exact-sequence-general}. Indeed, switching from $\Lambda$ to $O^\times \Lambda$ has the effect of 
  folding the exact sequence, which is of the form 
  \[ 1 \to G_1 \to G_2 \stackrel{f}{\to} G_3 \to G_4 \to 1, \qquad \text{into} \qquad 1 \to \frac{G_2}{\ker f} \to G_3 \to G_4 \to 1. \]
\end{remark}

Since we know that $\cl_O$ acts freely on the set of primitively $O$-oriented curves, we use the surjection $\pi : \cl_H \rightarrow \cl_O$ from the exact sequence above, and study the action of $\ker \pi$. By Proposition \ref{prop:exact-sequence-general}, The elements of $\ker \pi$ are principal ideals, which can be identified by elements of $O/\mfrak$ up to multiplication by $\Delta$ and $O^\times$. In particular, they are endomorphisms, leaving the curve fixed, and acting on the different $\Gamma$-level structures, which in turn can be identified (by definition) with left cosets of $\Gamma \subset \GL(O/\mfrak)$, up to $K$-oriented isomorphisms. This action is fairly easy to describe explicitly, as the following lemma shows.

\begin{corollary} \label{cor:kernel-action} 
    Let $O, \mfrak, \Lambda \subseteq \Z$ and the map $\pi$ be as before. Let $\Gamma \supseteq \Gamma_{O, \Lambda}(\mfrak)$. Then, $\ker \pi$ acts on the set
    \[
    X_\Gamma := \{M\Gamma \mid M \in \GL(O/\mfrak)\}/\sim
    \]
    where $\sim$ is the equivalence relation obtained by identifying cosets up to left multiplication by $\mu_u$ for $u \in O^\times$.
\end{corollary}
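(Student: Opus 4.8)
The plan is to turn the informal description given just above the statement into an explicit formula and then verify that it descends to $X_\Gamma$. First I would record that, by the proof of Proposition~\ref{prop:exact-sequence-general}, every element of $\ker\pi = P_O(\mfrak)/H$ can be written as $[\alpha O]$ with $\alpha\in O$ coprime to $\mfrak$ (and, adjusting the representative as in Definition~\ref{def:ideal_action}, of norm coprime to $p$), and that two such $\alpha,\beta$ give the same class exactly when $(\alpha/\beta)O\in P_{O,\Lambda}(\mfrak)$, i.e. when $u\alpha\equiv\lambda\beta\pmod{\mfrak}$ in $O$ for some $u\in O^\times$ and $\lambda\in\Lambda$.

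\textbf{The action.} I would then set $[\alpha O]\cdot(M\Gamma) := \mu_\alpha M\Gamma$, and check that this is exactly the action induced by Lemma~\ref{lem:supergroupaction} on level structures over a fixed curve. Concretely: fix $E\in\Ell_k(O)$ and, by Lemma~\ref{lem:structure-m-torsion}, an isomorphism of $O$-modules $\Phi_0 : O/\mfrak \to E[\mfrak]$; then every $\Gamma$-level structure on $E$ has the form $[\Phi_0\circ M]$, uniquely up to replacing $M$ by $M\gamma$ ($\gamma\in\Gamma$) or by $\mu_u M$ ($u\in O^\times$, the latter accounting for post-composition with the $K$-oriented automorphism $\iota(u)$), so that $\Gamma$-level structures on $E$ correspond bijectively to $X_\Gamma$. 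Since $\afrak = \alpha O$ is principal, $\varphi_\afrak$ is, up to $K$-oriented isomorphism, the endomorphism $\iota(\alpha)$ of $E$, which fixes $E$ and satisfies $\iota(\alpha)\circ\Phi_0 = \Phi_0\circ\mu_\alpha$; hence it sends $\Phi_0\circ M$ to $\Phi_0\circ(\mu_\alpha M)$, matching the formula. (One may equally take $[\alpha O]\cdot(M\Gamma):=\mu_\alpha M\Gamma$ as the definition and skip the curves, which then serve only as motivation.)

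\textbf{Well-definedness and axioms.} Independence of the coset representative of $M$ is immediate from commutativity of $O$: if $M'=\mu_v M$ with $v\in O^\times$ then $\mu_\alpha M' = \mu_v\mu_\alpha M$, so $\mu_\alpha M'\Gamma\sim\mu_\alpha M\Gamma$. Independence of the choice of $\alpha$ is where the hypothesis $\Lambda\subseteq\Z$ enters: from $u\alpha\equiv\lambda\beta\pmod{\mfrak}$ one gets $\mu_u\mu_\alpha=\mu_\lambda\mu_\beta$ in $\GL(O/\mfrak)$, and since $\lambda$ is a rational integer the automorphism $\mu_\lambda$ is central (multiplication by an integer commutes with every group automorphism of $O/\mfrak$) and lies in $\Gamma_{O,\Lambda}(\mfrak)\subseteq\Gamma$; hence $\mu_\alpha M\Gamma = \mu_{u^{-1}}\mu_\lambda\mu_\beta M\Gamma = \mu_{u^{-1}}\mu_\beta M\mu_\lambda\Gamma = \mu_{u^{-1}}\mu_\beta M\Gamma\sim\mu_\beta M\Gamma$. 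Finally $\mu_{\alpha\beta}=\mu_\alpha\mu_\beta$ and $\mu_1=\id$ give the group-action axioms.

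\textbf{Main obstacle.} The computation is light; the only genuinely delicate point is the last well-definedness in $\alpha$, where $\Lambda\subseteq\Z$ is essential — without it the multiplier $\mu_\lambda$ need neither be central nor act on $X_\Gamma$ through $\Gamma$ on the correct side, so the action would not descend, which is exactly why the corollary (like Lemma~\ref{lem:supergroupaction}) carries that hypothesis. A secondary, purely bookkeeping point is the claim that $\ker\pi$ is represented by ideals with an $O$-generator coprime to $\mfrak$, which is already contained in the proof of Proposition~\ref{prop:exact-sequence-general}.
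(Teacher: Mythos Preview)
Your proposal is correct and follows essentially the same approach as the paper: define the action by $[\alpha O]\cdot(M\Gamma)=\mu_\alpha M\Gamma$, then check well-definedness by using that $\mu_\lambda$ is central (because $\Lambda\subseteq\Z$) and lies in $\Gamma$, while $\mu_u$ for $u\in O^\times$ is absorbed by $\sim$. You are more explicit than the paper (checking also independence of the $M$-representative and the action axioms, and adding the level-structure interpretation as motivation), but the core argument is identical.
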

\begin{proof}
    As we have seen, $\ker \pi$ can be identified with elements $[\alpha] \in (O/\mfrak)^\times$, up to multiplication by $\Lambda$ and $O^\times$. We show that the natural action of sending the left coset $M\Gamma $ to $\mu_\alpha M\Gamma$ is well defined. This is clearly a well defined action by $(O/\mfrak)^\times$, so it suffices to show that multiplication by elements of $\Lambda$ and $O^\times$ act trivially. Since $\lambda \in \Lambda \subseteq \Z$, it is clear that
    \[
    \mu_\lambda M \Gamma = M \mu_\lambda \Gamma = M \Gamma,
    \]
    and further, for $u \in O^\times$, $\mu_u$ acts trivially by definition of $X_\Gamma$. \qed
\end{proof}

\subsection{Suborder class group actions} \label{sec:orders_acting}

As one of our main examples, let us concentrate on the case
where $\Lambda = \Z$ and $\mfrak = fO$ for some prime number $f$ different from $\charac k$. Pick $\sigma \in O$ such that $O = \Z[\sigma]$. Write $H = P_{O, \Z}(fO)$ and observe that 
$\Gamma := \Gamma_{O, \Z}(fO) \subseteq \GL(O / fO)$ 
is just the group of multiplications $\mu_\lambda$ by an integer $\lambda$ that is not divisible by $f$.
Thus we have
\[ Y_\Gamma = \{ \, (E, P, Q) \, | \, E \in \Ell_k(O), \, P, Q \text{ basis of } E[f] \} / \sim  \]
where $(E, P, Q) \sim (E, \lambda P, \lambda Q)$ for any scalar $\lambda \in (\Z/f\Z)^\times$. The action of $\cl_H$ on $Y_\Gamma$ is not transitive. Recall that there are two approaches towards turning this into a ``more transitive" action:
\begin{itemize}
    \item Instead of $Y_\Gamma$, we can act on $Z_\Gamma$, i.e., we can require that the isomorphism
    \[ \Phi : O/fO \to E[f] \]
    is an isomorphism of $O$-modules. This amounts to picking a basis of $E[f]$ of the form $P, \sigma(P)$. Note that it suffices to specify $P$ in this case, and because of the scaling we are in fact specifying a cyclic subgroup $C \subseteq E$ of order $f$. However, since $P, \sigma(P)$ must be a basis, the subgroups we thus obtain are those that are \emph{not} eigenspaces of $\sigma$ acting on $E[f]$. In other words, we can identify
    \begin{equation} \label{eq:ZGamma_desc} Z_\Gamma = \{ \, (E, C) \, | \, E \in \Ell_k(O), \, C \subseteq E \text{ kernel of descending $f$-isogeny} \}.
    \end{equation}
    Thanks to Theorem~\ref{thm:maxlevel} we know that $\cl_H$ acts freely and transitively on this set.
    \item Alternatively, we can apply the idea of making the action of $\cl_H$ on $Y_\Gamma$ more transitive by enlarging $\Gamma$. In this case it is natural to consider 
    \[ \Gamma_N^0 = \left\{ \, \begin{pmatrix} \ast & \ast \\ 0 & \ast \end{pmatrix} \, \right\} \supseteq \Gamma, \]
where the last inclusion makes sense upon identification of $\GL(O/fO)$ with $(\Z/f\Z)^2$. Thus by Lemma~\ref{lem:supergroupaction} we also have a natural action of $\cl_H$ on
\[ Y_N^0 = Y_{\Gamma^0_N} = \{ \, (E, C) \, | \, E \in \Ell_k(O), \, C \subseteq E \text{ cyclic subgroup of order $f$} \, \}. \]
However, unless $f$ is inert, this action is neither free nor transitive: any eigenspace of $\sigma$ acting on $E[f]$ is fixed by every element of $\cl_H$. To turn this into a free and transitive action one has to discard the eigenspaces; as such one again arrives at $Z_\Gamma$.
\end{itemize}

Now let $O' = \Z + fO$ be a suborder of relative conductor $f$ and recall from Theorem~\ref{thm:clOisgeneralized} that the natural map
\begin{equation} \label{eq:classgroupiso} \cl_{O'} \to \cl_H : [\afrak] \mapsto [\afrak O] \end{equation}
is an isomorphism. In fact, more generally, it is easy to check that the exact sequence from Proposition~\ref{prop:exact-sequence-general} fits in an isomorphism of exact sequences
\[ \begin{array}{ccccccccccc}
1 & \to & \frac{O^\times}{O'^\times} & \to & \frac{(O/fO)^\times}{(O'/fO)^\times} & \to & \cl_{O'} & \to & \cl_O & \to & 1 \\
& & \downarrow & & \downarrow & & \downarrow & & \downarrow &  \\ 
1 & \to & \frac{O^\times}{O^\times \cap (\Lambda + fO)} & \to & \frac{(O/fO)^\times}{\Delta} & \to & \cl_H & \to & \cl_O & \to & 1 
\end{array} \]
where the vertical maps are the natural maps, and where the sequence on top is the exact sequence from Theorem~\ref{thm:kopplagarius}.

Now recall from Section~\ref{ssec:CMaction} that we have a free and transitive action of $\cl_{O'}$ on $\Ell_k(O')$. On the other hand, as we have just discussed, there is also a free and transitive action of $\cl_H$ on $Z_\Gamma$. 
Finally, we have the isomorphism~\eqref{eq:classgroupiso} connecting $\cl_{O'}$ to $\cl_H$, as well as a natural bijection
\[ Z_\Gamma \to \Ell_k(O') : (E, C) \mapsto \pi(E, C) := E/C. \]
It can be argued that all these maps are compatible with each other:
\begin{lemma} \label{lem:cl_O_action}
For every ideal class $[\afrak] \in \cl_{O'}$ we have
\[ [\afrak] \star \pi(E, C) = \pi([\afrak O] \star (E, C)),\]
where the left action is that of $\cl_{O'}$ on $\Ell_k(O')$, while the right action is that of $\cl_H$ on $Z_\Gamma$.
\end{lemma}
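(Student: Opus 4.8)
The plan is to unwind the three maps involved and check the compatibility on the level of kernel subgroups, exploiting that everything is realized by explicit isogenies. First I would fix $(E,C) \in Z_\Gamma$, where by the description in~\eqref{eq:ZGamma_desc} the subgroup $C \subseteq E$ is the kernel of a descending $f$-isogeny, so that $\pi(E,C) = E/C$ carries the induced $O'$-orientation making it an element of $\Ell_k(O')$. Given a class $[\afrak] \in \cl_{O'}$, I would choose an integral representative $\afrak \subseteq O'$ of norm coprime to $f$ and to $\charac k$; then $\afrak O$ is an integral $O$-ideal, also of norm coprime to $f$, and both the $\cl_{O'}$-action on $E/C$ via $\varphi_{\afrak}$ and the $\cl_H$-action on $(E,C)$ via $\varphi_{\afrak O}$ are computed by quotienting out the corresponding torsion subgroups. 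The statement to prove is that the descending isogeny out of $\varphi_{\afrak O}(E)$ with kernel $\varphi_{\afrak O}(C)$ has codomain $\varphi_{\afrak O}(E)/\varphi_{\afrak O}(C)$ canonically identified with $\varphi_{\afrak}(E/C)$, compatibly with orientations.

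The key step is a commuting-square argument for isogenies. Since $\gcd(N(\afrak O), f) = 1$, the isogeny $\varphi_{\afrak O} : E \to \varphi_{\afrak O}(E)$ restricts to an isomorphism on $f$-torsion, hence $\varphi_{\afrak O}(C)$ is again a cyclic subgroup of order $f$, and one checks it is still the kernel of a descending isogeny (because $\varphi_{\afrak O}$ is horizontal, so it commutes with $\sigma$ in the sense used in the proof of Lemma~\ref{lem:structure-m-torsion}, and therefore maps non-eigenspaces to non-eigenspaces). Now consider the two composite isogenies $E \to E/C \to \varphi_{\afrak}(E/C)$ and $E \to \varphi_{\afrak O}(E) \to \varphi_{\afrak O}(E)/\varphi_{\afrak O}(C)$. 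I would argue these have the same kernel: the kernel of the first is the preimage under $E \to E/C$ of $(E/C)[\afrak]$, which by the definitions equals the subgroup of $E$ cut out by $C$ together with $E[\afrak O']$-type conditions; the kernel of the second is $C + E[\afrak O]$. The point is that, viewing $O' = \Z + fO \subseteq O$, one has $E[\afrak O] = E[\afrak O']$ for a representative coprime to $f$ (both equal $\bigcap_{\alpha \in \afrak}\ker\iota(\alpha)$ since $\afrak O$ and $\afrak O'$ have the same prime-to-$f$ part, and $\afrak$ is coprime to $f$), and the image of $E[\afrak O]$ in $E/C$ is exactly $(E/C)[\afrak]$ because $\varphi_C$ is an isomorphism on the prime-to-$f$ part. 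Hence both composites have kernel $C + E[\afrak O]$, so their codomains agree, which is precisely the claimed equality of curves; the orientation compatibility then follows since all four isogenies in the square are $K$-oriented (the horizontal ones by construction, the descending ones by the functoriality of the induced $O'$-orientation) and the induced orientations are determined by the isogenies via the pushforward formula.

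The main obstacle I expect is the bookkeeping around the precise relationship between $E[\afrak O]$ and $(E/C)[\afrak]$ and the descending isogeny structure: one must be careful that the $O'$-orientation on $E/C$ is the one coming from $C$ being a descending kernel (so that $\iota'(\afrak)$ makes sense and its fixed points are the image of $E[\afrak O]$), and that this is consistent under the further quotient by $\varphi_{\afrak O}(C)$. A clean way to sidestep case analysis is to phrase the whole argument as: there is a commutative square of $K$-oriented isogenies
\[
\begin{array}{ccc}
(E,\iota) & \xrightarrow{\ \varphi_{\afrak O}\ } & (E_{\afrak O}, \iota_{\afrak O}) \\
\downarrow{\scriptstyle \varphi_C} & & \downarrow{\scriptstyle \varphi_{\varphi_{\afrak O}(C)}} \\
(E/C, \iota') & \xrightarrow{\ \varphi_{\afrak}\ } & (E_{\afrak O}/\varphi_{\afrak O}(C), \iota'_{\afrak})
\end{array}
\]
obtained because $\varphi_{\afrak O}$ and $\varphi_C$ have coprime degrees, hence their pushforwards exist and the square commutes up to $K$-oriented isomorphism; identifying the bottom-right corner with $[\afrak]\star\pi(E,C)$ and the composite-along-the-top-then-right with $\pi([\afrak O]\star(E,C))$ gives the lemma. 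I would just need to verify that the induced orientation on the bottom-right via the two paths coincides, which is automatic from the pushforward formula $\iota_\varphi(-) = \frac{1}{\deg\varphi}\varphi\circ\iota(-)\circ\widehat{\varphi}$ being functorial in composition of isogenies.
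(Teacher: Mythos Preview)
Your proposal is correct and follows essentially the same strategy as the paper: both arguments establish a commuting square of $K$-oriented isogenies with the horizontal maps $\varphi_{\afrak}$, $\varphi_{\afrak O}$ on the $O'$- and $O$-levels and $f$-isogenies between the levels, then read off the desired identity. The only cosmetic difference is that the paper orients the vertical arrows as \emph{ascending} isogenies $E_1 \to E$, $E_1' \to E'$ (kernels $E_1[f,f\sigma]$, $E_1'[f,f\sigma]$) and appeals to~\cite[Lem.~6]{sutherland_volcanoes} for the commutativity, whereas you use the dual \emph{descending} isogenies $\varphi_C$, $\varphi_{\varphi_{\afrak O}(C)}$ and verify commutativity by the direct kernel computation $C + E[\afrak O]$; your variant is slightly more self-contained but otherwise the same argument.
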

\begin{proof}
   Write $E_1 = \pi(E, C)$ and let $E_1' = \varphi_{\afrak}(E_1)$. Then $E \in \Ell_k(O)$ lies above $E_1$ via an ascending isogeny $\varphi$ with kernel $E_1[f, f\sigma]$. Likewise, there is an elliptic curve
$E' \in \Ell_k(O)$ above $E_1'$, which is the codomain of an ascending isogeny $\varphi'$ with kernel $E_1'[f, f\sigma]$. It is easy to check that we obtain a
   commuting diagram
   \begin{center}
   \begin{tikzcd}
E \arrow[r, "\varphi_{\afrak O}"] 
& E' \\
E_1 \arrow[r, "\varphi_{\afrak}" ] \arrow[u, "\varphi"]
& E_1' \arrow[u, "\varphi'"]
\end{tikzcd}
\end{center}
showing that $[\afrak O] \star E = E'$, an equality which refers to the action of $\cl_O$ on $\Ell_k(O)$; see also the proof of~\cite[Lem.\,6]{sutherland_volcanoes}. It is then immediate that $C = \ker \hat{\varphi}$ is mapped via $\varphi_{\afrak O}$ to $C' := \ker \hat{\varphi}'$, from which the statement follows. \qed
\end{proof}

\begin{remark}
From the commutativity of the above diagram it follows that
\[ \hat{\varphi}' \circ \varphi_{\afrak O} \circ \varphi = \hat{\varphi}' \circ \varphi' \circ \varphi_{\afrak} = [f] \circ \varphi_{\afrak}, \]
showing that this is the horizontal isogeny corresponding to the ideal $f\afrak$. Since the natural map
$\cl_{O'} \to \cl_O$
is not injective, one can also wonder what ideal we end up with when first choosing an isogeny $\psi : E \to E'$ and considering the horizontal isogeny 
$\hat{\varphi}' \circ \psi \circ \varphi$. One particular case is where $E = E'$, as in the case of SCALLOP. In this case the ideals corresponding to $\hat{\varphi}' \circ \varphi$ have a particularly nice interpretation: they correspond to $O'$-ideals of norm $f^2$, of the form $\afrak_{\alpha,\beta} = (f^2, f(\alpha+\beta\sigma))$, for some $\alpha+\beta\sigma \in O$ such that $N_{K/\Q}(\alpha+\beta\sigma) \not \equiv 0 \bmod f$. Indeed, there is in this case a free and transitive action of $\ker(\cl_{O'} \to \cl_O)$ on the set of all $E/C \in \Ell_k(O')$ over cyclic $f$-subgroups $C$ that are not eigenspaces of $\sigma$ acting on $E[f]$, which corresponds to the free and transitive action of $\cl_{O'}$ on $Z_{\Gamma}$ as in Theorem \ref{thm:maxlevel}. It can be proven that ideal classes $[\afrak_{\alpha,\beta}]$ in $\cl_{O'}$ are in bijection with $(\alpha:\beta) \in \mathbb{P}^1(\F_f)$ such that $N_{K/\Q}(\alpha+\beta\sigma) \not \equiv 0 \bmod f$, and that they constitute all of $\ker(\cl_{O'} \to \cl_O)$ (see also \cite[Lemma 3.2]{modular_polys_12}). Once we fix a subgroup $C = \ker \hat{\varphi}$, the action of such classes is explicitly given by: $[\afrak_{\alpha,\beta}]\star\pi(E,C) = \pi(E, (\alpha+\beta\iota(\bar\sigma))(C))]$, where $\bar \sigma$ is the complex conjugate of $\sigma$. This follows from Lemma \ref{lem:cl_O_action} and the fact that $[\afrak_{\alpha,\beta}] = [(N_{K/\Q}(\alpha+\beta\sigma), f(\alpha+\beta\bar\sigma))]$, hence $[\afrak_{\alpha,\beta} O] = [(\alpha + \beta\bar\sigma)]$.

\end{remark}

\subsection{Further examples}

\subsubsection{Cyclic torsion.}
  Assume that $O/\mfrak$ is cyclic, i.e., $b_{\mfrak} = 1$ in Lemma~\ref{lem:m-torsion}. Then we can observe
  \begin{itemize}
     \item that every $\alpha \in O$ is congruent to an integer mod $\mfrak$; in particular it can be assumed without loss of generality that $\Lambda \subseteq \Z$,
     \item every group isomorphism $\Phi : O/\mfrak \to E[\mfrak]$ is necessarily an isomorphism of $O$-modules, i.e., $Y_\Gamma = Z_\Gamma$ for any $\Gamma \subseteq \GL(O/\mfrak)$.     
  \end{itemize}
  As a more concrete example, take $\Lambda = \{1\}$ and let $f$ be a prime number such that $fO = \mfrak \cdot \overline{\mfrak}$ for some prime ideal $\mfrak \subseteq O$. By Theorem~\ref{thm:maxlevel} the ray class group $I_O(\mfrak) / P_{O, 1}(\mfrak)$ acts freely and transitively on the set $Z_{\Gamma_{O, 1}(\mfrak)}$ of $O$-oriented elliptic curves $E/k$ equipped with an eigenvector of $\sigma$ acting on $E[f]$. For instance, if $K = \Q(\sqrt{-p})$, $f$ is odd and $p \equiv 1 \bmod f$ as in CSIDH, then we can take 
 $\mfrak = (f, \sqrt{-p} - 1)$ 
  and this consists of supersingular elliptic curves over $\F_p$ with a distinguished $\F_p$-rational point of order $f$ (up to negation).


\subsubsection{Scaling by \boldmath{$n$}-th powers.} 
Let $O$ be an order and $\mfrak = (f)$ an ideal
such that $f \equiv 1 \pmod{4}$ is prime in $O$. Let $\Lambda = \Z^2 := \{\alpha^2 \mid \alpha \in \Z\}$, and assume further that $O^\times = \{\pm1\}$, such that $O^\times \subset \Lambda + f\Z$ (here we use $f \equiv 1 \pmod{4}$).
Observe that $H := P_{O,\Lambda}(\mfrak)$ is thus the set of principal ideals generated by elements that are equivalent to integers that are squares $\mod{\mfrak}$, and then it follows from \ref{prop:exact-sequence-general}, that $\#\cl_H = 2(f+1)h(O)$. As in the situation in Subsection \ref{ssec:CMaction}, we get the set $Z_{\Gamma_{O, \Lambda}(\mfrak)}$ consists of elements of the form
$(E, P, \sigma(P))$, where $P \in E[f]$, and where we identify $(E, P, \sigma(P)) \sim (E, Q, \sigma(Q))$ if and only if $P = [\lambda]Q$ for some $\lambda \in \Z$ that is a square $\bmod f$. Thus, the situation here is a fine-grained version the action of $\cl_{\Z + fO}$ on $(E, \langle P \rangle)$ from Subsection \ref{ssec:CMaction}: The slightly larger class group acts on the set that can be recognized as curves, together with one of two points of order $f$ for each subgroup of order $f$, and where the two points differ by multiplication by a non-square.

This example easily generalizes to $\Lambda = \Z^n$ for any $n \mid f-1$, such that $O^\times \subset \Lambda + f\Z$.

\subsubsection{The full class group.} 
If $\Lambda = O$, then $P_{O, \Lambda}(\mfrak) = P_O(\mfrak)$ is the group of all fractional principal ideals coprime to $\mfrak$, and we end up with the standard action of $\cl_O$ on $\Ell_k(O)$, which indeed naturally coincides with $Z_\Gamma$ where $\Gamma = \Gamma_{K, O_K}(\mfrak)$. Note that in general we do not have a well-defined action of $\cl_O$ on the larger set $Y_\Gamma$;
indeed the condition $\Lambda \subseteq O^\times \Z$ from Theorem~\ref{thm:maxlevel} is violated.
Nevertheless it makes sense to study $Y_\Gamma$ as a set; e.g., when $\mfrak = NO$ then it parametrizes $O$-oriented elliptic curves $E$ together with a basis of $E[N]$, where two bases are identified if and only if they can be transformed into one another via an endomorphism in $\iota(O)$.

\section{Security reductions and non-reductions}\label{sec:security}

Although we do not have cryptographic applications in mind, it is natural to extend the central question of~\cite{GPV} to our generalized setting: given
\[(E_1, \Phi_1),  \ (E_2, \Phi_2) \ \in \ Z_{\Gamma_{O, \Lambda}(\mfrak)},\]
how hard is it to find a generalized ideal class
$[\afrak] \in I_O(\mfrak) / P_{O, \Lambda}(\mfrak)$
such that $[\afrak](E_1, \Phi_1) = (E_2, \Phi_2)$? 
This problem is known as the vectorization problem~\cite{couveignes}, which quantum computers can solve in sub-exponential time $L_h(1/2)$, with $h$ denoting the size of the generalized class group. 

Unsurprisingly, the main conclusion from~\cite[Alg.~2]{GPV} also applies here: despite the generalized ideal class group being larger, there is an immediate reduction to the vectorization problem for $\cl_O$, potentially at the cost of a discrete logarithm computation (which may be hard classically, but succumbs to Shor's algorithm quantumly). Indeed, after finding an ideal $\afrak \in I_O(\mfrak)$ such that
$\varphi_{\afrak} : E_1 \to E_2$, 
one can find $\alpha \in O$ such that $\alpha \afrak$ moreover maps $\Phi_1$ to $\Phi_2$, via the computation of Weil pairings and discrete logarithms. 

\begin{remark}
It is worth contrasting this with actions by class groups of suborders $O' \subseteq O$. From Section~\ref{sec:orders_acting} we know that the action of $\cl_{O'}$ on $\Ell_k(O')$ is a generalized class group action in disguise. However, it would be wrong to apply the previous discussion and conclude that the corresponding vectorization problem reduces to that of $\cl_O$ acting on $\Ell_k(O)$ at the cost of discrete logarithm computations. There is again a reduction, but it proceeds by walking to $\Ell_k(O)$ via isogenies; in other words, the ``disguise" is crucial for security. An extreme case is SCALLOP~\cite{scallop,scallopHD}, where $\cl_O$ is the trivial group, but here the isogenies are of very large prime degree, hence infeasible to compute.
\end{remark}

\paragraph{Cases where vectorization becomes easier}
In view of the attacks on SIDH, the extra level structure may in fact make the vectorization problem much easier. E.g., in the case of the ray class group for scalar modulus $\mfrak = NO$, an attacker has access to a basis $P_1, Q_1$ of $E_1[N]$ along with their images under the secret isogeny $\varphi_{\afrak} : E_1 \to E_2$. Assuming we are given a bound on $\deg \varphi_{\mathfrak{a}} = N(\mathfrak{a})$ and assuming that $N$ is large enough and smooth, we can recover $\deg \varphi_{\mathfrak{a}}$ and run the algorithm from~\cite{robert2023breaking} to solve the vectorization problem in classical polynomial time. Another interesting case is the generalized class group action from Section~\ref{sec:orders_acting}, where we have a free and transitive action on oriented elliptic curves together with the kernel of a descending $f$-isogeny, see~\eqref{eq:ZGamma_desc}. Thus, here one is given access to such a kernel $C_1 \subseteq E_1[f]$ along with its image $C_2 \subseteq E_2[f]$. But then one also knows that $\sigma(C_1)$ is connected to $\sigma(C_2)$ via the same unknown scalar: the vectorization problem becomes an instance of the M-SIDH problem~\cite{msidh}, which can again be broken in overstretched cases. Moreover, if $f$ splits then we also know that the action preserves the eigenspaces of $\sigma$ acting on the $f$-torsion; as such we have access to \emph{four} subgroups together with their images, and we can reduce to the case of SIDH via~\cite[Lem.\,1]{adaptive}.


\paragraph{Supergroups of $P_O(\mfrak)$}
Finally, let us drop the overall assumption made at the beginning of Section~\ref{sec:gen_CM_action}, namely that $H \subseteq P_O(\mfrak)$: what if, conversely, our congruence subgroup $H$ is a supergroup of $P_O(\mfrak)$?
In this case the generalized class group $\cl_H = I_O(\mfrak) / H$ naturally acts on subsets
$\label{eq:orbit} 
\{ \, [\hfrak] E \, | \, \hfrak \in H \, \} \subseteq \Ell_k(O)$
of oriented elliptic curves that can be connected via an ideal in $H$. In theory, this gives a reduction from the vectorization problem for $\cl_O$ to that of the smaller group $\cl_H$: first find the class connecting $\{ \, [\hfrak] E_1 \, | \, \hfrak \in H \,  \}$ and
$\{ \, [\hfrak] E_2 \, | \, \hfrak \in H \,  \}$ and then solve a vectorization problem for $H / P_O(\mfrak)$. If this were possible, then this could be converted into a Pohlig--Hellman type reduction for class group actions. But unfortunately (or fortunately), it is unclear how to work with the sets $\{ \, [\hfrak] E \, | \, \hfrak \in H \, \}$; e.g., when merely working with a representant, one lacks tools for equality testing (which amounts to deciding whether or not two $O$-oriented elliptic curves $E_1, E_2$ are connected via an ideal in $H$).

\bibliographystyle{alpha}
\bibliography{biblio}\vspace{0.75in}

\end{document}